\newif\ifdetailed
\title{Compactifications of universal moduli spaces of vector bundles and the log-minimal model program on $\overline{M}_g$ 
\ifdetailed
\\ \vspace{2mm} {\large Detailed Version}
\fi
}
\author{Matthew Grimes}
\begin{document}
\maketitle
\begin{abstract}
Recent work on the log-minimal model program for the moduli space of curves, as well as past results of Caporaso, Pandharipande, and Simpson motivate an investigation of compactifications of the universal moduli space of slope semi-stable vector bundles over moduli spaces of curves arising in the Hassett--Keel program.
Our main result is the construction of a compactification of the universal moduli space of vector bundles over several of these moduli spaces, along with a complete description in the case of pseudo-stable curves.
\end{abstract}
\ifdetailed
\tableofcontents
\else
\fi
\section{Introduction}
Moduli spaces of vector bundles over smooth curves have long been a subject of interest in algebraic geometry.
Recall that due to the work of Mumford, Newstead, and Seshadri there is a projective moduli space $U_{e,r}(C)$ of slope semi-stable vector bundles of degree $e$ and rank $r$ over any fixed complex projective curve $C$.
The result has since been generalized to other settings by many others.
In particular, in \cite{simpson1994moduli}, Simpson constructed a relative moduli space of slope semi-stable sheaves on families of polarized projective schemes.

As a consequence of Simpson's result, there is a relative moduli space of slope semi-stable vector bundles for the universal curve $C^\circ_g\rightarrow M^\circ_g$ over the moduli space of smooth, automorphism-free curves, polarized by the relative canonical bundle (for $g\geq 2$). 
Though the coarse moduli space of Deligne--Mumford stable curves, $\overline{M}_g$, does not admit a universal curve, it is natural to ask whether there is a moduli space of slope semi-stable bundles over $\overline{M}_g$ that compactifies the universal moduli space over the space of automorphism-free curves.
Caporaso and Pandharipande affirmatively answered this question in the case $r=1$ and for general rank respectively in \cite{caporaso1994compactification,pandharipande1996}.

Precisely, in \cite{pandharipande1996}, Pandharipande constructed a relative moduli space for $g\geq 2$
\[\overline{U}_{e,r,g}\rightarrow\overline{M}_g\]
parametrizing slope semi-stable torsion-free sheaves of uniform rank $r$ and degree $e$, with a dense open subset that can be identified with the uniform rank locus in Simpson's moduli space.

More recently, with the aim of providing a modular interpretation for the canonical model of the moduli space of curves, there has been interest in understanding alternate modular compactifications of the moduli space of curves.
The Hassett--Keel program outlines a principle for applying the log-minimal model program to the moduli space of genus $g$ curves to obtain a modular interpretation of the canonical model by studying spaces of the form
\[\overline{M_g}(\alpha):=\proj\left( \bigoplus_nH^0(\overline{M_g},n(K_{\overline{M}_g}+\alpha\Delta)) \right),\]
where $\Delta$ is the boundary divisor in $\overline{M}_g$ and $\alpha\in[0,1]\cap\Q$.
One has $\overline{M}_g(1)=\overline{M}_g$ and $\overline{M}_g(0)$ equal to the canonical model of $M_g$ for $g\gg 0$ (\cite{harris1982kodaira,eisenbud1987kodaira,farkas2009global}).   
We direct the reader as well to \cite{fedorchuk2010alternate,hyeon2010outline} for more details.

The first steps in the program have been worked out in \cite{hassett-hyeon2009,hassett2013log,afsflip-existence,afsflip-loc,afsflip-proj}.
In particular, Hassett and Hyeon showed (\cite{hassett-hyeon2009}) that the first birational modification occurs at $\alpha=9/11$ and is a divisorial contraction to the space
\[\overline{M}_g(9/11)\cong\overline{M}^{ps}_g,\]
where $\overline{M}^{ps}_g$ is Schubert's moduli space of pseudo-stable curves (see \cite{schubert1991new}).
Recall that $\overline{M}^{ps}_g$ is a coarse moduli space for the moduli functor of pseudo-stable curves (see Section \ref{section:moduli-problem} for a precise definition).
The contraction essentially replaces elliptic tails with cusps.
Recall that, as seen in \cite{hassett2005classical}, the moduli space $\overline{M}_2^{ps}$ contains a GIT semi-stable point.

Given $\alpha$ and a modular interpretation of $\overline M_g(\alpha)$, it is natural to ask if there exists a compactified universal moduli space of slope semi-stable sheaves
\[\overline{U}_{e,r,g}(\alpha)\rightarrow\overline{M}_{g}(\alpha)\]
which co-represents the moduli stack $\overline{\mathcal{U}}_{e,r,g}(\alpha)$ of slope semi-stable sheaves on $\alpha$-stable curves.
This does not follow immediately from Simpson's construction.
For instance, though $M^\circ_g\subset \overline{M}^{ps}_g$, there is no universal curve over $\overline{M}^{ps}_g$.
The answer for $7/10<\alpha\leq 9/11$ is a corollary to our main theorem, proved in \S\ref{section:construction}.
\begin{theorem}
	For all $\alpha\in\Q\cap(2/3,1]$, $e,r\geq 1$ and $g\geq 3$, there exists a projective variety $\overline{U}_{e,r,g}(\alpha)$ with a canonical projection $\pi:\overline{U}_{e,r,g}(\alpha)\rightarrow\overline{M}_g(\alpha)$ such that the diagram
	\[\xymatrix{
		U_{e,r}(C^\circ_g/M^\circ_g)\ar@{^(->}[r]\ar[d]&\overline{U}_{e,r,g}(\alpha)\ar[d]^{\pi}\\
		M^{\circ}_g\ar@{^(->}[r]&\overline{M}_g(\alpha)
	}\]
	is cartesian and the top row is a compactification of $U_{e,r}(C^\circ_g/M^\circ_g)$, the moduli space obtained by applying Simpson's construction to the universal curve $C^\circ_g\rightarrow M^\circ_g$.
	Over the GIT stable points of $\overline{M}_g(\alpha)$, the points of $\overline{U}_{e,r,g}(\alpha)$ correspond to aut-equivalence classes of slope semi-stable torsion-free sheaves of rank $r$ and degree $e$.
	Moreover, the fiber of $\pi$ over any GIT stable $[C]\in\overline{M}_{g}(\alpha)$ is isomorphic to $U_{e,r}(C)/\aut(C)$. 
	
	\label{thm:universal-moduli-space}
\end{theorem}
The distinction between Theorem~\ref{thm:universal-moduli-space} and Corollary~\ref{cor:universal-moduli-space} is worth emphasizing.
For $\alpha>2/3-\epsilon$, $\overline{\mc{M}}_g(\alpha)$ admits a good moduli space (\cite{afsflip-existence,afsflip-loc,afsflip-proj}), but $\overline{M}_g(\alpha)$ is only a coarse moduli space for $\alpha>7/10$.  In this setting, the statement of the theorem can be refined because the GIT stable points of $\overline{M}_g(\alpha)$ comprise the entire space.
\begin{corollary}
	\label{cor:universal-moduli-space}
	For all $e,r\geq 1$, $g\geq 3$, and $\alpha>7/10$, the points of $\overline{U}_{e,r,g}(\alpha)$ correspond to aut-equivalence classes of slope semi-stable torsion-free sheaves of rank $r$ and degree $e$.
	Moreover, the fiber of $\pi$ over $[C]\in\overline{M}_{g}(\alpha)$ is isomorphic to $U_{e,r}(C)/\aut(C)$. 
	Finally, $\overline{U}_{e,r,g}(\alpha)$ co-represents $\overline{\mc{U}}_{e,r,g}(\alpha)$ (defined in Def.~\ref{def:moduli-problem}).
\end{corollary}

The primary difference in approach between \cite{caporaso1994compactification} and \cite{pandharipande1996} is whether the degeneration of a bundle on a smooth curve is a balanced vector bundle or a torsion-free sheaf.
This distinction is present in the current literature as well: using an approach more in line with \cite{caporaso1994compactification}, analogous compactifications over $\overline{M}_g^{ps}$ have been constructed for $r=1$ in \cite{vivianietal} and then in general in \cite{fringuelli2016picard}.

Additionally, an alternative compactificication to Pandharipande's space $\overline{U}_{e,r,g}$ has been found in \cite{schmitt2004hilbert} by allowing torsion-free sheaves on singular curves which are the pullback of a vector bundle on the stable reduction of the curve.
\begin{rem}
	Over any moduli stack of curves, $\mc{M}$, there is always an Artin stack $\mc{U}$ of slope semi-stable torsion-free sheaves (this follows for instance from \cite[Thm.~2.1]{lieblich2006remarks}).
The above theorem can be framed as demonstrating that for certain $\mc{M}$, $\mc{U}$ admits a good moduli space $U$ with an ample line bundle.

Alternatively, one can also form an intermediate stack by applying Simpson's construction: to any family of curves $\mc{C}\rightarrow S$ over a scheme, $\mc{U}^{Simp}(\mc{C}/S)$ is defined to be $U(\mc{C}/S)$.

Because Simpson's construction is canonical, we may glue over a cover of $\mc{M}$ and form the stack $\mc{U}^{Simp}$, relatively projective over $\mc{M}$.
In particular, if $\mc{M}$ is Deligne--Mumford, so is $\mc{U}^{Simp}$ (by virtue of being relatively projective over Deligne--Mumford), and therefore by the Keel--Mori Theorem it admits a good moduli space which will also be a good moduli space for $\mc{U}$ (see e.g.\ \cite[Tag 050L]{stacks-project}).

It is not immediately obvious that the good moduli space is projective, but in a discussion with the author, Alexeev observed that again because Simpson's construction is canonical and the good moduli space is relatively projective locally, the relative polarizations glue and the good moduli space is projective.

The assumption that $\mc{M}$ is Deligne--Mumford is critical for this construction, and the GIT construction presented here applies in greater generality.

We will consider weakening the Deligne--Mumford condition in the base in future work, as well as whether $\overline{U}_{e,r,g}^{ps}$ is the first step in a ``relative'' log-minimal model program for $\overline{U}_{e,r,g}$.
\end{rem}

We now outline the paper and our strategy for proving the result.
Our strategy is a modification of the strategy employed in \cite{pandharipande1996}, but differs in two significant ways.
First, we do not restrict our construction to Deligne--Mumford stable curves, but consider general Gorenstein curves--this primarily complicates various bounds in the fiberwise GIT problem.
Second, we consider the problem in the context of the Hassett--Keel program, and prove that the same construction essentially applies over any moduli space of curves with a GIT presentation, though less can be said in the presence of GIT semi-stable curves.

For concreteness, our outline is for the case of pseudo-stable curves.
Because $\overline{M}_g(\alpha)$ is a GIT quotient of a Hilbert or Chow scheme for $\alpha > 2/3$ (\cite{hassett2013log}), the same argument goes through more or less immediately.

First, we observe that for a given degree $e$, twisting the sheaves under consideration by an ample line bundle forms an isomorphism with the same moduli problem for some higher degree; in other words, it suffices to assume $e$ is large (see Remark~\ref{rem:moduli-degree-isomorphism} for details).
For $g\geq 3$, let $H_g$ denote the appropriate locus in the Hilbert scheme corresponding to 4-canonically embedded pseudo-stable curves, with universal curve $U_H\subset H_g\times \mathbb{P}^N$.
Let $\nu:U_H\rightarrow\mathbb{P}^N$ denote the projection map.
A given rank $r$ and degree $e$ uniquely determine a Hilbert polynomial, $\Phi(t)$.
To streamline notation, let $n=\Phi(0)$.
We define $\pi:Q^r\rightarrow H_g$ to be the locus in a relative Quot scheme parametrizing sheaves of uniform rank $r$.
A point $\xi\in Q^r$ corresponds to an equivalence class of the following data: a point of $H_g$ corresponding to a curve $C$; a presentation of sheaves $\mathbb{C}^n\otimes \mathcal{O}_{C}\rightarrow E\rightarrow 0$ such that the Hilbert polynomial of $E$ with respect to $\omega_C^{\otimes 4}$ is $\Phi$ and $E$ is of uniform rank.
		  The groups $SL_{N+1}$ and $SL_n$ act naturally on $Q^r$ by changing the coordinates of the curve's embedding and the presentation of the sheaf, respectively.
		  For any $k$, we have an $SL_{N+1}\times SL_n$-linearized ample line bundle $\mathcal{L}_k:=\pi^\ast\mathcal{O}_{H_g}(k)\otimes\mathcal{O}_{\mathfrak{Quot}}(1)$ on $Q^r$.
		  We may therefore define
		  \[\overline{U}_{e,r,g,k}^{ps}:=Q^r/\!\!/_{\mathcal{L}_k}SL_{N+1}\times SL_n.\]

		  A standard variation of GIT argument (see Propositions \ref{prop:pand-loci-containment} and \ref{prop:stable-loci-equality} in Section \ref{sec:vgit}) tells us that when $k\gg 0$, the GIT (semi-)stability of a point of $Q^r$ is entirely determined by the (semi-)stability with respect to the action of $SL_n$.
		  In fact, this is the same as the GIT (semi-)stability of the point with respect to the action of $SL_n$ on the fiber $\mathfrak{Quot}^{\Phi,\omega^{\otimes 4}_{C}}_{\mathbb{C}^n\otimes\mathcal{O}_C,C}$, linearized by the restriction of $\mathcal{O}_{\mathfrak{Quot}}(1)$.
		  It is well-known (e.g., \cite{simpson1994moduli}) that when the linearization on the fiber has sufficiently high degree, GIT (semi-)stability is equivalent to slope (semi-)stability.
		  However, we require a bound on the linearization which holds independent of the curve under consideration, or in other words, for every fiber of $U_H\rightarrow H_g$ simultaneously.
		  		
		  Such a bound is given by \cite[Thm.~4.7]{simpson1994moduli} by applying Simpson's construction to the universal family $U_H\rightarrow H_g$.

Moving forward, we would like to construct analogous moduli spaces over each of the Hassett--Keel moduli spaces and complete the point classification for $\alpha \leq 7/10$.
At the time of writing, the latest results due to \cite{afsflip-existence,afsflip-loc,afsflip-proj} include a classification up to $\alpha=2/3-\epsilon$.
One of the first obstructions to applying the same techniques to classify the points is the existence of strictly semi-stable curves in $\overline{M}_g(\alpha)$ for $\alpha\leq 7/10$.
Additionally, as $\alpha$ varies, the various $\overline{U}_{e,r,g}(\alpha)$ are clearly birational, because they all compactify the moduli of vector bundles over automorphism-free curves; further details about these birational mappings have not yet been worked out.
Finally, the above result demonstrates that $\overline{U}_{e,r,g}^{ps}$ is a ``good moduli space,'' in the sense of \cite{alper2008good}, for the Artin stack, $[Q^{SS}/\!\!/G]$.
This approach to the problem is considered in greater detail in forthcoming work of the author.

Now we briefly outline the structure of the paper.
We establish notation and recall various standard results in section \S\ref{section:preliminaries}.
In section \S\ref{section:construction}, we precisely state the moduli problem of interest, we perform the construction of the compactified universal moduli space in detail, and demonstrate that the constructed space co-represents the moduli functor.
\subsection*{Acknowledgements}
The author would like to take this opportunity to gratefully acknowledge his advisor, Sebastian Casalaina-Martin, for introducing the author to the problem, his guidance, and his invaluable comments in editing and revising this paper.
The author is also indebted to the referees whose thoughtful comments have greatly improved quality of the paper.
\section{Preliminaries}
\label{section:preliminaries}
\subsection{Notations and Conventions}  
Here we fix notation and recall standard useful results which we will use later.
\begin{notation}[Curve]
	A \emph{curve} is a proper connected one-dimensional scheme over the complex numbers.
	The \emph{genus} of a curve $C$ will refer to the arithmetic genus of $C$, $h^1(C,\mathcal{O}_C)$.
\end{notation}
\ifdetailed
\begin{notation}[Polarized Curve]
	A \emph{polarized curve} is a pair $(C,L)$, where $C$ is a curve and $L$ is an ample line bundle.
\end{notation}
\fi
\begin{definition}
	For a reduced curve $C$, the \emph{class of singularity types} of $C$ is defined to be
	\[T=T(C):=\{[\widehat{\O}_{C,x}]:x\in C\},\]
	where $[\widehat{\O}_{C,x}]$ denotes the isomorphism class of the completed local ring $\widehat{\O}_{C,x}$.
	We say two curves $C$ and $C'$ \emph{have the same class of singularity types} if $T(C)=T(C')$.
	Given a set $\mathscr{T}$ of isomorphism classes of complete local rings, we say that a curve \emph{has at worst singularities of type $\mathscr{T}$} if $T(C)\subset\mathscr{T}$.
\end{definition}
\begin{rem}
	As there are only a finite number of singular points in a given reduced curve, and the complete local ring on a smooth point is the completion of a polynomial ring, there are only a finite number of isomorphism classes of rings in $T(C)$.
	This definition is related to the common definition of singularity type of a curve, but does not keep track of the count of each singularity type.
	For example, all singular nodal curves have the same class of singularity types.  
	\end{rem}
\ifdetailed
If $E$ is any line bundle on a curve $C$, we define
\[\deg(E):=\chi(E)-\chi(\O_C)=\chi(E)-(1-g).\]
More generally, if $E$ is a vector bundle of rank $r$,
\[\deg(E):=\chi(E)-\rank(E)\chi(\O_C)=\chi(E)-\rank(E)(1-g).\]
If $L$ is a polarization of $C$, then
\[\chi(E\otimes L^{\otimes t})=\deg(E) + \rank(E)\chi(\O_C)+\rank(E)\deg(L)t.\]
With this in mind, we make the following
\fi
Recall the following version of asymptotic Riemann--Roch.
\begin{lemma}[{\cite[Corollary 8, p.~152]{seshadri1982}}]
	\label{lemma:rank-definitions}
	Let $(C,L)$ be a polarized curve with $\deg L=d$.
	For the irreducible components of $C$, $\{C_i\}$, denote by $L_i$ the restriction $L|_{C_i}$.
	Let $d_i=\deg L_i$.
	Then for any coherent sheaf $F$, we have
	\[\chi(F\otimes L^t)=\chi(F)+t\sum_i r_id_i,\]
	where $r_i:=\dim_{k(\eta_i)}F|_{C_i}\otimes k(\eta_i)$ and $\eta_i$ is the generic point of $C_i$.
\end{lemma}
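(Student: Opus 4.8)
The plan is to treat $t\mapsto\chi(F\otimes L^t)$ as a Hilbert polynomial and to compute its two coefficients separately. Since $\dim\supp(F\otimes L^t)=\dim\supp F\leq 1$, this function is a numerical polynomial in $t$ of degree at most one (Snapper's lemma, i.e. the general theory of Hilbert polynomials); writing it as $\chi(F\otimes L^t)=\chi(F)+b(F)\,t$, the constant term is read off by setting $t=0$, since $\chi(F\otimes L^0)=\chi(F)$. The entire content of the lemma is then the identification of the slope $b(F)$ with $\sum_i r_i d_i$.

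First I would observe that both sides of the desired identity are additive on short exact sequences of coherent sheaves, so that it suffices to verify the claim on a set of generators of the Grothendieck group $K_0(\mathrm{Coh}\,C)$. The slope $b$ is additive because $\chi$ is additive in short exact sequences and $L^t$ is locally free, so $-\otimes L^t$ preserves exactness; hence $b(F)=b(F')+b(F'')$ for every $0\to F'\to F\to F''\to 0$. The generic ranks $r_i$ are additive because localization at the generic point $\eta_i$ is exact and, for the reduced curves under consideration, $\mathcal{O}_{C,\eta_i}=k(\eta_i)$ is a field, so that $F\otimes k(\eta_i)=F_{\eta_i}$ and dimension is additive over the resulting short exact sequence of $k(\eta_i)$-vector spaces. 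Both assignments therefore factor through $K_0(\mathrm{Coh}\,C)$.

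Next I would invoke the standard d\'evissage fact that $K_0(\mathrm{Coh}\,C)$ is generated by the classes $[\mathcal{O}_Z]$ of structure sheaves of integral closed subschemes $Z\subseteq C$; for a curve these are precisely the reduced components $C_i$ and the closed points. For a closed point $p$, the sheaf $\mathcal{O}_p$ has finite length, so $\chi(\mathcal{O}_p\otimes L^t)=\chi(\mathcal{O}_p)$ is constant in $t$, giving $b=0$, while $r_i=0$ for every $i$; both sides vanish. For a component, writing $\iota\colon C_i\hookrightarrow C$, the projection formula gives $\iota_*\mathcal{O}_{C_i}\otimes L^t\cong\iota_*(L_i^{t})$, and since $\iota$ is a closed immersion, $\chi(C,\iota_*L_i^t)=\chi(C_i,L_i^t)=\deg(L_i^t)+1-p_a(C_i)=t\,d_i+\chi(\mathcal{O}_{C_i})$ by Riemann--Roch on the integral projective curve $C_i$. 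Thus $b(\mathcal{O}_{C_i})=d_i$, while the generic-rank vector of $\mathcal{O}_{C_i}$ is $1$ along $C_i$ and $0$ elsewhere, so $\sum_j r_j d_j=d_i$ as well. The two sides agree on all generators, and additivity propagates the equality to every coherent $F$.

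The step I expect to require the most care is the reduction to generators: one must check that the generic ranks $r_i$ of an arbitrary $F$ are correctly recovered from its class in $K_0(\mathrm{Coh}\,C)$ --- a point class and the class of a component $C_j$ with $j\neq i$ both restrict to zero at $\eta_i$, so the coefficient of $[\mathcal{O}_{C_i}]$ in the expansion equals $r_i$ --- and that the torsion contributions supported at the nodes and other intersection points of the components, as well as any embedded points, affect only the constant term $\chi(F)$ and not the slope. These are exactly the dimension-zero sheaves handled above, so they are harmless for the linear coefficient; keeping the bookkeeping of generic ranks consistent with the filtration is the only genuinely delicate part.
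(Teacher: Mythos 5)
Your proposal is correct, but strictly speaking there is no in‑paper proof to compare it to: the paper states this lemma as a quotation of Seshadri (Corollary~8, p.~152 of \emph{Fibr\'es vectoriels sur les courbes alg\'ebriques}) and gives no argument. Relative to Seshadri's own treatment, which carries out the reduction by hand (splitting off torsion and comparing $F$ with its restrictions to the irreducible components, the difference being supported in dimension zero), you package exactly that d\'evissage into the observation that both $F\mapsto b(F)$ (the linear coefficient of the Hilbert polynomial, which exists by Snapper's lemma) and $F\mapsto\sum_i r_i(F)d_i$ are additive on short exact sequences, hence factor through $K_0(\mathrm{Coh}\,C)$, and then check equality on the standard generators $[\mathcal{O}_{C_i}]$ and $[\mathcal{O}_p]$, where the identity reduces to the projection formula and Riemann--Roch on an integral projective curve. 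This buys brevity and makes the genuine inputs transparent: Snapper's lemma, exactness of $-\otimes L^t$ and of localization, and additivity of degree for line bundles on an integral curve. Two remarks. First, your parenthetical ``for the reduced curves under consideration'' is not a throwaway: reducedness is needed both for $\mathcal{O}_{C,\eta_i}=k(\eta_i)$ to be a field (so that $F|_{C_i}\otimes k(\eta_i)=F_{\eta_i}$ and generic rank is additive) and for the statement itself, which fails on non‑reduced curves unless $r_i$ is replaced by the length of $F_{\eta_i}$ over $\mathcal{O}_{C,\eta_i}$; this matches the standing hypotheses of Seshadri and of the paper, whose curves (pseudo-stable, reduced Gorenstein) are all reduced. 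Second, the ``delicate point'' in your last paragraph is actually vacuous: once both invariants are known to factor through $K_0$ and to agree on a generating set, they agree on every class, in particular on $[F]$; no recovery of the coefficients $r_i$ from an expansion of $[F]$ is needed, so that step can simply be deleted.
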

Motivated by this lemma, one makes the following definition of rank and degree of a sheaf on a curve.
\begin{definition}[Rank and Degree]
	Let $(C,L)$ be a polarized curve of genus $g$ with $\deg L=d$ and let $F$ be a coherent sheaf on $C$.
	If $\Phi(t)=\chi(F\otimes L^t)$, the \emph{rank} and \emph{degree} of $F$ \emph{with respect to $L$} are defined so that
	\[\Phi(t)=\deg_LF+\rank_L F\chi(\O_C)+t\rank_L F\deg L\]
	holds.
\end{definition}
	It follows that if $C$ is irreducible the generic rank agrees with $\rank_L$.
	In this case, neither $\rank_L$ nor $\deg_L$ depend on $L$.
\begin{definition}
	A sheaf $F$ on $C$ is said to be of uniform rank if there exists a number $r$ such that for every component $C_i$ of $C$, $\rank F|_{C_i}=r$.
\end{definition}
\begin{rem}
	If $F$ is of uniform rank, then $\rank_L F$ and $\deg_L F$ are both integers and are independent of $L$.
	Indeed, this follows from Lemma \ref{lemma:rank-definitions} because $\sum r_id_i=rd$.
	\ifdetailed
	Because $L$ and $\O_C$ are locally isomorphic, we have
	\[0\rightarrow L \rightarrow \bigoplus_i L|_{C_i}\rightarrow \tau\rightarrow 0,\]
	\[0\rightarrow \O_C \rightarrow \bigoplus_i \O_C|_{C_i}\rightarrow \tau\rightarrow 0,\]
	where $\tau$ is torsion.
	Thus,
	\[\sum_i \chi(\O_C|_{C_i}) - \chi(\O_C) = \sum_i \chi(L|_{C_i}) - \chi(L).\]
	Using the fact that $\deg L = \chi(\O_C)-\chi(L)$, we have
        \[\sum\deg L|_{C_i} = \sum_i \chi(L|_{C_i})-\sum_i \chi(\O_C|_{C_i})=\chi(L) - \chi(\O_C) =\deg L.\]
	We see then that $\sum r_id_i = rd$, and so because $\chi$ is integer-valued, $r$ must be an integer.
	By the definition of $\rank_L$, it follows that $\deg_L$ must also be an integer.
	The independence from $L$ follows because the rank and degree agree with the usual definitions on each irreducible component.
	\fi
\end{rem}
\begin{rem}
	\label{remark:bound-on-rank}
	In particular, even when $F$ is not of uniform rank, then because $d_i>0$ for every $i$, $r_i\leq rd$.
	We will make use of this fact later in the paper.
\end{rem}

We will make extensive use of the fact that, for a coherent sheaf $F$ of uniform rank and a line bundle $M$, we have
\[\rank_L(F\otimes M)=\rank_L F,\quad \deg_L(F\otimes M)=\deg_L(F)+\rank_L(F)\deg(M).\]

A coherent sheaf $F$ on $C$ is said to be \emph{pure} if for every non-zero subsheaf $F'\subset F$, the dimension of the support of $F'$ is equal to the dimension of the support of $F$.
A coherent sheaf $F$ on $C$ is said to be \emph{torsion-free} if it is pure and the support of $F$ is equal to $C$.
\begin{definition}
	\label{def:slope-stable}
Let $(C,L)$ be a polarized curve and $F$ a torsion-free sheaf on $C$.
$F$ is said to be slope stable (slope semi-stable) with respect to $L$ if for every nonzero, proper subsheaf $0\rightarrow E\rightarrow F$,
\[\frac{\chi(E)}{\sum s_id_i}<(\leq)\frac{\chi(F)}{\sum r_id_i},\]
where $s_i$ and $r_i$ denote the ranks of $E$ and $F$ on each irreducible component of $C$, and $d_i$ is the degree of $L$ restricted to each irreducible component.
\end{definition}
\begin{rem}
If $(C,L)$ is a polarized irreducible curve and $F$ is a vector bundle on $C$, then $F$ is slope-stable (slope-semistable) with respect to $L$ if and only if for each nonzero subsheaf $0\rightarrow E\rightarrow F$,
\[\frac{\deg(E)}{\rank(E)}<(\leq)\frac{\deg(F)}{\rank(F)}.\]
We caution the reader that if $C$ is reducible, then even if we restrict to sheaves of uniform rank so that $\rank_L$ and $\deg_L$ do not depend on $L$, the slope stability condition of Definition \ref{def:slope-stable} does in general depend on $L$, because $E$ is not required to be of uniform rank.
\end{rem}
%
We are interested in the interaction of singularities on a curve with sheaves on the curve.
The following lemma allows us to bound the dimension of certain quotients in terms of analytic invariants of the curve in question.
\begin{lemma}[{\cite[Lemma~7, p.~150]{seshadri1982}}]
	\label{lemma:module-dimension-bound}
	Let $x$ be a point of an irreducible curve $Y$.
	Let $M$ be an $\O_{Y,x}$-module, torsion-free of rank $r$.
	Then
	\[\dim_{\mathbb{C}}(M/\mathfrak{m}_xM)\leq (1+\dim_{\mathbb{C}}\overline{\O}_{Y,x}/\O_{Y,x})\cdot r.\]
\end{lemma}
The statement and proof of Lemma \ref{lemma:module-dimension-bound} assume $Y$ is irreducible.
We have the following bound for reducible $Y$.
\begin{corollary}
	\label{cor:module-dimension-bound-reducible}
	Suppose $(Y,L)$ is a polarized projective curve with $\deg L=d$.
	Let $\mathcal{F}$ be a coherent $\O_Y$-module of multirank $(r_1,\dots,r_p)$.
	Then for any $y\in Y$,
	\begin{equation}
		\label{eqn:module-dimension-bound-reducible}
		\dim_{\mathbb{C}}\left( \mathcal{F}_y/\mathfrak{m}_y\mathcal{F}_y \right)\leq d\delta\max_i (r_i),	\end{equation}
	where
	\[\delta=\max_{A\in T(Y)} (1+\dim_{\mathbb{C}}\overline{A}/A).\]
\end{corollary}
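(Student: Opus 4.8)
The plan is to reduce the statement to the irreducible case already handled in Lemma \ref{lemma:module-dimension-bound} by passing to the normalization, which separates $Y$ into its analytic branches at $y$. Write $A=\O_{Y,y}$ and $M=\mathcal{F}_y$, and recall that by Nakayama's lemma $\dim_k(M/\mathfrak{m}_yM)$ is exactly the minimal number of generators of $M$; since all the quantities involved are analytic-local (as recorded in the conductor note), I may complete and work with $\widehat{A}$. The essential obstruction is that, once $Y$ is reducible, $A$ is no longer a domain, so neither Lemma \ref{lemma:normalized-module-free} nor the argument of Lemma \ref{lemma:module-dimension-bound} applies verbatim; moreover $\mathcal{F}$ need not restrict to a torsion-free sheaf on each component, so a naive component-by-component estimate fails. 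Working on the normalization sidesteps both difficulties at once.

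Concretely, let $\nu\colon Y^\nu\to Y$ be the normalization and let $\mathfrak{p}_1,\dots,\mathfrak{p}_s$ be the minimal primes of $A$, one for each branch of $Y$ through $y$. Then $\overline{A}=\prod_j \overline{A_j}$ with $A_j=A/\mathfrak{p}_j$, and each $\overline{A_j}$ is a semilocal Dedekind domain, hence a principal ideal ring. I would first establish the reducible analogue of Lemma \ref{lemma:normalized-module-free}: since $M$ is torsion-free, $\nu^{-1}M$ decomposes as $\prod_j N_j$ with each $N_j$ a torsion-free, hence free, $\overline{A_j}$-module whose rank $\rho_j$ is the generic rank of $\mathcal{F}$ on the corresponding component. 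By Remark \ref{remark:bound-on-rank} this rank is one of the multirank entries, so $\rho_j\leq\max_i r_i$ for every branch. With this decomposition in hand, the snake-lemma and conductor computation of Lemma \ref{lemma:module-dimension-bound} carries over unchanged: using that $\mathfrak{c}$ is the conductor one gets the surjection $\mathfrak{c}M\twoheadrightarrow\nu^{-1}\mathfrak{c}M$ together with $\mathfrak{c}\,\nu^{-1}M=\nu^{-1}\mathfrak{c}M$ and the inclusion $M\hookrightarrow\nu^{-1}M$, whence $\dim_k(M/\mathfrak{m}_yM)\leq\dim_k(M/\mathfrak{c}M)\leq\dim_k(\nu^{-1}M/\mathfrak{c}\,\nu^{-1}M)$.

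It then remains to compute the right-hand side in terms of the conductor data of $A$. Writing $\mathfrak{c}=\prod_j\mathfrak{c}_j$ for the decomposition of the conductor under $\overline{A}=\prod_j\overline{A_j}$, and using $\mathfrak{c}\overline{A}=\mathfrak{c}$, one has
\[
\dim_k\bigl(\nu^{-1}M/\mathfrak{c}\,\nu^{-1}M\bigr)=\sum_j \rho_j\,\dim_k(\overline{A_j}/\mathfrak{c}_j)\leq \bigl(\max_i r_i\bigr)\sum_j\dim_k(\overline{A_j}/\mathfrak{c}_j)=\bigl(\max_i r_i\bigr)\dim_k(\overline{A}/\mathfrak{c}).
\]
The exact sequence $0\to A/\mathfrak{c}\to\overline{A}/\mathfrak{c}\to\overline{A}/A\to 0$ gives $\dim_k(\overline{A}/\mathfrak{c})=\dim_k(A/\mathfrak{c})+\dim_k(\overline{A}/A)\leq\delta$, the last inequality being the definition of $\delta$ applied to the complete local ring of $Y$ at $y$. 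Combining these yields $\dim_k(\mathcal{F}_y/\mathfrak{m}_y\mathcal{F}_y)\leq\delta\max_i r_i$, which is in fact slightly stronger than \eqref{eqn:module-dimension-bound-reducible} since $d\geq 1$; the factor $d$ provides harmless slack.

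The step I expect to require the most care is the reducible version of Lemma \ref{lemma:normalized-module-free}, namely identifying the branch ranks $\rho_j$ with the entries of the multirank and verifying that $\nu^{-1}M$ genuinely splits as a product of free modules over the factors $\overline{A_j}$. This is precisely where reducibility and the possible failure of torsion-freeness upon restriction interact, and it is what forces the passage to the normalization rather than a direct component-wise estimate; everything after that is conductor bookkeeping parallel to the irreducible case.
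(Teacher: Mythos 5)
Your proposal is correct, but it takes a genuinely different route from the paper's. The paper proves the corollary in one line: restrict $\mathcal{F}$ to each irreducible component of $Y$, apply the irreducible-case bound (following Seshadri, p.~152) to each restriction, and observe that the number of components is at most $d$ since each component has positive $L$-degree --- that is exactly where the factor $d$ in \eqref{eqn:module-dimension-bound-reducible} comes from. You instead generalize the proof of Lemma \ref{lemma:module-dimension-bound} itself to the semilocal setting: decompose $\overline{A}=\prod_j\overline{A_j}$ over the minimal primes of $A=\O_{Y,y}$, use that each $\overline{A_j}$ is a semilocal Dedekind ring (so finitely generated torsion-free modules over it are free, giving the reducible analogue of Lemma \ref{lemma:normalized-module-free}), and then run the identical conductor/snake-lemma comparison $\dim_k M/\mathfrak{m}_yM\leq\dim_k M/\mathfrak{c}M\leq\dim_k\bigl(\nu^{-1}M/\mathfrak{c}\,\nu^{-1}M\bigr)$. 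Every step you flag does go through: since $\mathfrak{c}\overline{A}=\mathfrak{c}$, the surjection $\mathfrak{c}M\rightarrow\mathfrak{c}\,\nu^{-1}M$ works verbatim when $A$ is not a domain, and the branch ranks are entries of the multirank by Remark \ref{remark:bound-on-rank}. What your route buys is the sharper bound $\delta\max_i(r_i)$ with no factor of $d$ (which would propagate to slightly better constants in Definition \ref{def:degree-bound}); what the paper's route buys is brevity, reusing Lemma \ref{lemma:module-dimension-bound} as a black box at the cost of the harmless factor $d$.

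Two caveats, neither fatal. First, your claim that ``a naive component-by-component estimate fails'' is too strong: the restrictions $\mathcal{F}|_{Y_i}$ can indeed acquire torsion, but the component-wise argument works after passing to the torsion-free quotients of the restrictions, and this is precisely what the paper (via Seshadri) does; your approach avoids that bookkeeping rather than being forced by its breakdown. Second, your argument (like the paper's) uses that $M=\mathcal{F}_y$ is pure, i.e. torsion-free on its support: this is needed for the inclusion $M\rightarrow\nu^{-1}M$, is not stated explicitly in the corollary but is necessary for it to be true at all, and does hold in the intended application (Lemma \ref{lemma:semi-stable}, where $\mathcal{F}$ is a subsheaf of a torsion-free sheaf). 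Relatedly, if you complete at the start, the minimal primes of $\widehat{A}$ are analytic branches rather than components; each branch lies over a unique component and inherits its generic rank, so $\rho_j\leq\max_i(r_i)$ still holds, but it is cleaner to run the argument on $\O_{Y,y}$ itself and invoke analytic invariance (the paper's Note on the conductor) only at the final step, when comparing $\dim_k A/\mathfrak{c}+\dim_k\overline{A}/A$ with $\delta$, which is defined in terms of complete local rings.
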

\begin{proof}
	This follows by bounding the restriction of $\mathcal{F}$ to each component of $Y$ (\cite[p.~152]{seshadri1982}) and observing that $d$ is greater than the number of irreducible components of $Y$.
	Indeed, from \cite[p.~152]{seshadri1982}, we have the claim
	\[\dim_{\C} \left(\mathcal{F}_y/\mathfrak{m}_y\mathcal{F}_y\right)\leq \sum r_{l}\delta_{l},\]
	where the summation is over the components of the normalization containing $y$.
	By $r_{l}$, we denote the rank of the stalk of $\mathcal{F}|_{Y_{l},y}$, and $\delta_l$ is the bound of Lemma \ref{lemma:module-dimension-bound}.
	Thus we arrive at the claimed bound by taking the maximum over $l$ of $r_{l}$ and $\delta_l$ and observing that the number of irreducible components must be at most $d$.
	\ifdetailed
	We sketch the proof of this claim here.

	First, note that because $\mathfrak{m}_y\cong\bigoplus_l\mathfrak{m}_{a(l)}$, the restriction of the ideal to the various components, and $\mathcal{F}_l\cong \mathcal{F}_y/\left(\bigoplus_{s\neq l}\mathfrak{m}_{a(s)}\right)\mathcal{F}_y$, we have
	\[\dim_k \left(\mathcal{F}_y/\mathfrak{m}_y\mathcal{F}_y\right)=\dim_k\left( \mathcal{F}_{a(1),y}/\mathfrak{m}_{a(1)}\mathcal{F}|_{a(1),y} \right).\]
	Now, let $\mathcal{T}_{a(1)}$ be the torsion subsheaf of $\mathcal{F}|_{a(1),y}$, and let
	\[\mathcal{G}=\mathcal{F}|_{a(1),y}/\mathcal{T}_{a(1)}.\]
	Note that
	\[\dim_k\left( \mathcal{F}_{a(1),y}/\mathfrak{m}_{a(1)}\mathcal{F}|_{a(1),y}\right)=\dim_k(\mathcal{T})+\dim_k(\mathcal{G}/\mathfrak{m}_{a(1)}\mathcal{G}).\]
	A brief computation, which we exclude, gives an injection
	\[\psi:\mathcal{T}_{a(1)}\rightarrow\bigoplus_{s\neq 1}\mathcal{G}_s/\mathfrak{m}_{a(s)}\mathcal{G}_s.\]
	Thus,
	\[\dim_k(\mathcal{T})+\dim_k(\mathcal{G}/\mathfrak{m}_{a(1)}\mathcal{G})\leq \sum_l \dim_k\left( \mathcal{G}_l/\mathfrak{m}_{a(l)}\mathcal{G}_l \right).\]
	The above lemma now gives the claimed inequality, but we will use the coarser \eqref{eqn:module-dimension-bound-reducible}.
	\fi
\end{proof}
\subsection{Variation of GIT for quotients of products}
\label{sec:vgit}
In this section, we study the properties of quotients of products.
Let $G$ and $H$ be reductive groups.
Let $(X,L_X)$ and $(Y,L_Y)$ be polarized schemes with linearized actions of $G$ on both $X$ and $Y$, and of $H$ on $Y$.
Assume that the actions of $G$ and $H$ commute on $Y$.
Then we have an induced action of $G\times H$ on the product $X\times Y$ given by 
\[(g,h)\cdot(x,y)=(g\cdot x,g\cdot(h\cdot y)).\]
Moreover, we have many linearizations on $X\times Y$ corresponding to 
\[L_{X}^{\otimes a}\boxtimes L_{Y}^{\otimes b}:=\pi_X^\ast L_X^{\otimes a}\otimes\pi_Y^\ast L^{\otimes b}_Y\] 
for all $(a,b)\in\Z^2_{> 0}$.

With several group actions under consideration, we fix some notation: the superscripts $S$ and $SS$ will indicate stability and semi-stability with respect to the product action of $G\times H$.
Stability and semi-stability for $G$ alone will be indicated by superscripts $S_G$ and $SS_G$, and similarly for $H$ alone.
We will also refer to the (semi-)stable locus in $X$ with respect to $G$ as $X^{(S)S_G}$.

Our plan, following \cite{pandharipande1996}, is to shift the weight of the polarization almost entirely to $X$.
This, we will show, reduces the stability condition for $G\times H$ on $X\times Y$ to the stability condition for $H$ on $Y$.
The following key propositions, understood in the context of variation of GIT, make this precise:
\begin{prop}
	\label{prop:pand-loci-containment}
	Let $\pi_X:X\times Y\rightarrow X$ be the natural projection map.
	Then for $a/b\gg 0$, we have, with respect to the linearization $L_X^{\otimes a}\boxtimes L_Y^{\otimes b}$ on $X\times Y$,
	\[\pi^{-1}_X(X^{S_G})\subset (X\times Y)^{S_{G}}_{[a,b]}\subset (X\times Y)^{SS_{G}}_{[a,b]}\subset\pi^{-1}_X(X^{SS_G}).\]
\end{prop}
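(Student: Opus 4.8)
The plan is to establish the chain of inclusions by analyzing the Hilbert--Mumford numerical function as $k\to\infty$, exploiting the fact that for a one-parameter subgroup $\lambda$ of $G$ (viewed diagonally in $G\times H$), the total weight with respect to $L_X^{\otimes k}\boxtimes L_Y$ splits as $k\cdot\mu^{L_X}(x,\lambda)+\mu^{L_Y}(y,\lambda)$. The key structural observation is that $G$ acts on \emph{both} factors while $H$ acts only on $Y$; consequently a one-parameter subgroup lying purely in $G$ sees the linearization on $X$ scaled by $k$, and for $k$ large this term dominates unless it vanishes. I would first reduce the problem to destabilization by one-parameter subgroups via the Hilbert--Mumford criterion, so that stability of a point $(x,y)$ for the $G$-action is governed by the sign of $\mu^{L_X^{\otimes k}\boxtimes L_Y}((x,y),\lambda)=k\,\mu^{L_X}(x,\lambda)+\mu^{L_Y}(y,\lambda)$ over all $\lambda$ in $G$.

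For the \textbf{outer inclusions}, the argument is essentially a limiting one. To prove $\pi_X^{-1}(X^{S_G})\subset(X\times Y)^{S_G}_{[k,1]}$, I take $(x,y)$ with $x\in X^{S_G}$, so $\mu^{L_X}(x,\lambda)>0$ for every nontrivial $\lambda$ in $G$; since the weight $\mu^{L_Y}(y,\lambda)$ ranges over a bounded set of integers as $\lambda$ and $y$ vary over the relevant finite combinatorial data (the weights are determined by the linearization and the finitely many limit points), there is a $k\gg 0$ making $k\,\mu^{L_X}(x,\lambda)+\mu^{L_Y}(y,\lambda)>0$ uniformly. The dual inclusion $(X\times Y)^{SS_G}_{[k,1]}\subset\pi_X^{-1}(X^{SS_G})$ follows by contraposition: if $x\notin X^{SS_G}$, there is a $\lambda$ in $G$ with $\mu^{L_X}(x,\lambda)<0$, and again for $k$ large the $k$-scaled term forces the total weight negative, destabilizing $(x,y)$. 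The middle inclusion $(X\times Y)^{S_G}_{[k,1]}\subset(X\times Y)^{SS_G}_{[k,1]}$ is immediate from the definitions of stability and semi-stability.

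The \textbf{main obstacle} is ensuring that a single $k$ works \emph{uniformly}, rather than merely for each point separately; the numerical function $\mu^{L_Y}(y,\lambda)$ must be bounded below independently of the choices so that the threshold for $k$ can be chosen once and for all. This requires invoking boundedness of the weight set, which holds because the weights attached to a fixed linearization take finitely many values on the finite set of relevant one-parameter subgroup data up to conjugacy and the quotient is of finite type. I would phrase this boundedness as a consequence of properness of $X\times Y$ together with the standard semicontinuity of the Hilbert--Mumford function, following the variation-of-GIT framework of Thaddeus and Dolgachev--Hu, as used in \cite{pandharipande1996}; this is precisely the technical heart, since without uniformity the inclusions would only hold fiberwise and not as stated for a global $k\gg 0$.
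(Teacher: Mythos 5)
Your overall strategy is the right one, and it is in fact the same route underlying the references the paper leans on for this statement (the paper gives no argument of its own beyond citing \cite[Lemma 4.1]{thaddeus1996geometric} and Propositions 7.1.1--7.1.2 of \cite{pandharipande1996}): split the Hilbert--Mumford index for a one-parameter subgroup $\lambda$ of $G$ acting diagonally as $\mu^{L_X^{\otimes k}\boxtimes L_Y}((x,y),\lambda)=k\,\mu^{L_X}(x,\lambda)+\mu^{L_Y}(y,\lambda)$, let the $k$-term dominate, and observe that the middle inclusion is trivial. However, the step you yourself identify as the technical heart --- that a single $k$ works uniformly --- is justified by a claim that is false as stated. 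The index $\mu^{L_Y}(y,\lambda)$ is homogeneous of degree one in $\lambda$ (replacing $\lambda$ by $\lambda^n$ multiplies it by $n$), so it does \emph{not} range over a bounded set of integers as $\lambda$ varies; likewise, one-parameter subgroups up to conjugacy form an infinite set (a lattice modulo a finite Weyl group), so there is no ``finite set of relevant one-parameter subgroup data'' to appeal to. For the same reason, knowing only that $\mu^{L_X}(x,\lambda)>0$ for each nontrivial $\lambda$ does not suffice for the first inclusion: if $\mu^{L_X}(x,\lambda)$ stayed of size $1$ while $\mu^{L_Y}(y,\lambda)$ decreased linearly in the ``length'' of $\lambda$, no fixed $k$ would win.

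The standard repair is to compare growth rates after normalizing by a Weyl-invariant norm $\|\lambda\|$ on one-parameter subgroups. Fix a maximal torus $T$ of $G$. Restricted to cocharacters of $T$, each of $\mu^{L_X}(x,\cdot)$ and $\mu^{L_Y}(y,\cdot)$ is the support function of the weight polytope (the ``state'') of the point in question, and as $x$ and $y$ range over $X$ and $Y$ only finitely many states occur, because all weights lie in the finite set of $T$-weights of the ambient linear actions. Consequently: (i) $|\mu^{L_Y}(y,\lambda)|\leq C\|\lambda\|$ for all $y$ and all $\lambda$ in $T$, with $C$ depending only on the linearization; (ii) $x\in X^{S_G}$ forces $\mu^{L_X}(x,\lambda)\geq\epsilon\|\lambda\|$, where $\epsilon>0$ is the minimum of the finitely many positive values arising from states of stable points, while $x\notin X^{SS_G}$ yields some $\lambda$ with $\mu^{L_X}(x,\lambda)\leq-\epsilon\|\lambda\|$. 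The identity $\mu(x,g\lambda g^{-1})=\mu(g^{-1}x,\lambda)$ transports these bounds from $T$ to all one-parameter subgroups of $G$. Any integer $k>C/\epsilon$ then gives both outer inclusions simultaneously and uniformly in $(x,y)$: for the first, $k\mu^{L_X}(x,\lambda)+\mu^{L_Y}(y,\lambda)\geq(k\epsilon-C)\|\lambda\|>0$ for every nontrivial $\lambda$; for the last, the destabilizing $\lambda$ for $x$ gives $k\mu^{L_X}(x,\lambda)+\mu^{L_Y}(y,\lambda)\leq(-k\epsilon+C)\|\lambda\|<0$. With this normalization argument substituted for your boundedness claim, your proof is complete; without it, the step it rests on fails.
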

\begin{proof}
	This is a standard result in variation of GIT; see e.g. \cite[Lemma 4.1]{thaddeus1996geometric}.
	This particular formulation is equivalent to Propositions 7.1.1 and 7.1.2 in \cite{pandharipande1996}.
\end{proof}
\begin{prop}[{\cite[Prop.~8.2.1]{pandharipande1996}}]
	\label{prop:stable-loci-equality}
	Let $Q\subset \pi^{-1}_X(X^{S_G})$ be a closed subscheme.
	Then for $a,b$ as in Proposition \ref{prop:pand-loci-containment}, we have
	\[Q^{S_H}_{[a,b]}=Q^{S}_{[a,b]} \text{ and } Q^{SS_H}_{[a,b]}=Q^{SS}_{[a,b]}.\]
\end{prop}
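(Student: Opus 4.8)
The plan is to check the two set-theoretic inclusions in each equality separately and to phrase everything through the Hilbert--Mumford numerical criterion applied to the torus generated by a one-parameter subgroup of $G\times H$. Throughout I adopt the convention that $z$ is semistable (resp. stable) for a linearization $\mathcal L$ precisely when $\mu^{\mathcal L}(z,\lambda)\ge 0$ for every one-parameter subgroup $\lambda$ (resp. $>0$ for every nontrivial $\lambda$), and I will use two formal properties of $\mu$ that are immediate from its weight-space description: additivity in the linearization, and, for two \emph{commuting} one-parameter subgroups $\sigma,\tau$, the subadditivity $\mu^{\mathcal L}(z,\sigma\tau)\le\mu^{\mathcal L}(z,\sigma)+\mu^{\mathcal L}(z,\tau)$. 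For $\mathcal L=L_X^{\otimes k}\boxtimes L_Y$ and $\lambda=(\lambda_G,\lambda_H)$, the product action gives the basic decomposition
\[\mu^{\mathcal L}\big((x,y),\lambda\big)=k\,\mu^{L_X}(x,\lambda_G)+\mu^{L_Y}(y,\lambda_G\lambda_H),\]
since $\lambda$ acts on $X$ through $\lambda_G$ alone and on $Y$ through the commuting product $\lambda_G\lambda_H$.

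The inclusions $Q^{S}_{[k,1]}\subset Q^{S_H}_{[k,1]}$ and $Q^{SS}_{[k,1]}\subset Q^{SS_H}_{[k,1]}$ are the easy half: they hold because $\{e\}\times H$ is a subgroup of $G\times H$, so (semi)stability for the larger group forces it for the subgroup. Concretely, restricting the criterion to one-parameter subgroups $(e,\lambda_H)$ annihilates the $L_X$-term and leaves exactly $\mu^{L_Y}(y,\lambda_H)$, which is the invariant governing $H$-(semi)stability; hence every $G\times H$-(semi)stable point of $Q$ is $H$-(semi)stable.

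For the reverse inclusions, note first that by hypothesis $Q\subset\pi_X^{-1}(X^{S_G})$, so for $(x,y)\in Q$ the point $x$ is $G$-stable on $X$, giving $\mu^{L_X}(x,\lambda_G)>0$ for every nontrivial $\lambda_G$; the value of $k$ is taken to be that of Proposition \ref{prop:pand-loci-containment}. Now let $(x,y)\in Q$ be $H$-(semi)stable and let $\lambda=(\lambda_G,\lambda_H)$ be nontrivial. If $\lambda_G=e$, the decomposition reduces to $\mu^{L_Y}(y,\lambda_H)$, which is $\ge 0$ (resp. $>0$) by $H$-(semi)stability. If $\lambda_G\ne e$, apply subadditivity to the commuting pair $\lambda_G^{-1}$ and $\lambda_G\lambda_H$, whose product is $\lambda_H$, to obtain $\mu^{L_Y}(y,\lambda_G\lambda_H)\ge\mu^{L_Y}(y,\lambda_H)-\mu^{L_Y}(y,\lambda_G^{-1})$; combining this with the decomposition and with $\mu^{L_Y}(y,\lambda_H)\ge 0$ yields
\[\mu^{\mathcal L}\big((x,y),\lambda\big)\ge k\,\mu^{L_X}(x,\lambda_G)-\mu^{L_Y}(y,\lambda_G^{-1}).\]
It remains to make this right-hand side positive.

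The main obstacle is exactly this final estimate, and in particular its \emph{uniformity}: a single $k$ must work for every member of the family parametrized by $Q$ at once. Fixing a norm $\|\cdot\|$ on one-parameter subgroups from a fixed $G$-equivariant projective embedding, the $L_Y$-weights are controlled by the finitely many torus weights of the chosen linearization, giving $\mu^{L_Y}(y,\lambda_G^{-1})\le C\|\lambda_G\|$ with $C$ independent of the point; and the $G$-stability of $x$, packaged through the Kempf-type lower bound underlying Proposition \ref{prop:pand-loci-containment} together with the boundedness of $Q$, gives $\mu^{L_X}(x,\lambda_G)\ge\varepsilon\|\lambda_G\|$ with $\varepsilon>0$ uniform over $Q$. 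Then for $k>C/\varepsilon$ the displayed bound is $\ge(k\varepsilon-C)\|\lambda_G\|>0$, which finishes the stable case; the semistable case is identical with non-strict inequalities. The delicate point is precisely that $\varepsilon$ and $C$ can be chosen once for all of $Q$, and this is where the hypothesis $Q\subset\pi_X^{-1}(X^{S_G})$ and the variation-of-GIT argument behind Proposition \ref{prop:pand-loci-containment} are essential.
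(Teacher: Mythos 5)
Your argument is sound, but it follows a genuinely different route from the paper's. The paper's proof (an admitted sketch, with the coordinate details delegated to Pandharipande) handles the hard inclusion by writing the Hilbert--Mumford index of $(x,y)$ against $\lambda=(\lambda_G,\lambda_H)$ as a three-term sum $k\mu^{L_X}(x,\lambda_G)+\mu^{L_Y}(y,\lambda_G)+\mu^{L_Y}(y,\lambda_H)$, then quoting the first inclusion of Proposition \ref{prop:pand-loci-containment} --- every point of $Q$ is already $G$-stable on $X\times Y$ for the linearization $L_X^{\otimes k}\boxtimes L_Y$, so the first two terms together have the stable sign --- and using $H$-(semi)stability for the last term; all uniformity in $k$ is thus absorbed into Proposition \ref{prop:pand-loci-containment}, and the $k$ of the statement is literally the $k$ of that proposition. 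You instead start from the honest identity $\mu\bigl((x,y),\lambda\bigr)=k\,\mu^{L_X}(x,\lambda_G)+\mu^{L_Y}(y,\lambda_G\lambda_H)$, recover the $H$-term via the subadditivity trick $\mu^{L_Y}(y,\lambda_G\lambda_H)\ge\mu^{L_Y}(y,\lambda_H)-\mu^{L_Y}(y,\lambda_G^{-1})$, and re-prove the uniformity quantitatively through the norm bounds $\mu^{L_X}(x,\cdot)\ge\varepsilon\|\cdot\|$ on the stable locus and $|\mu^{L_Y}(y,\cdot)|\le C\|\cdot\|$, taking $k>C/\varepsilon$. Your version is in one respect more careful: the paper's three-term decomposition is asserted as an equality, but in general it is only an inequality (the maxima over states need not be achieved at a common state), and your two-term identity plus subadditivity is the rigorous substitute. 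The costs are, first, that the uniform lower bound $\varepsilon$ over the whole stable locus is the crux and you assert it rather than prove it --- it is true, by finiteness of the possible state sets for a fixed maximal torus together with conjugation-invariance of the norm, and for that reason the boundedness of $Q$ you invoke is not actually needed --- and, second, that you produce \emph{some} sufficiently large $k$ rather than the specific $k$ of Proposition \ref{prop:pand-loci-containment} named in the statement; this is harmless in the paper's application, where only $k\gg 0$ is used, but strictly speaking it proves a slightly different assertion unless you also note that your bounds imply the inclusions of that proposition. Finally, beware that the paper's sign convention is opposite to yours: there, (semi)stability corresponds to \emph{negative} Hilbert--Mumford indices, so the inequalities in your write-up would all need to be reversed before being spliced into the paper's framework.
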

\begin{proof}
\ifdetailed
We first sketch a variation of GIT argument, and then give a proof following the argument of \cite[Prop.~8.2.1]{pandharipande1996}.
	In both arguments, we only prove the statement for stable loci; the semi-stable case is identical.
\else
We sketch a variation of GIT argument here.
An explicit proof in coordinates can also be found in \cite[Prop.~8.2.1]{pandharipande1996}.
We only prove the statement for stable loci; the semi-stable case is identical.
\fi

	To begin, certainly, $Q^S_{[a,b]}\subset Q^{S_H}_{[a,b]}$, so it suffices to demonstrate the opposite inclusion.
	For this, let $\lambda$ be a one-parameter subgroup of $G\times H$, with components $\lambda_G$ and $\lambda_H$.

	Recall briefly that the Hilbert--Mumford index for $(x,y)\in Q_{[a,b]}$ with respect to $\lambda$ is obtained by taking the limit of $\lambda\cdot(x,y):=(x,y)_0$ as the parameter approaches 0.
	The limit $(x,y)_0$ is fixed by $\lambda$ and therefore $\lambda$ acts on the fiber of $L^{\otimes a}_X\boxtimes L^{\otimes b}_Y$ over $(x,y)_0$.
	The Hilbert--Mumford index is the character of this action.

	Let $\mu$ denote the Hilbert--Mumford index and fix $(x,y)\in Q^{S_H}_{[a,b]}$.
	From local arguments,
\ifdetailed 
(such as those below)
\fi
we have
	\begin{equation}
		\mu^{L_X^a\boxtimes L_Y^b}( (x,y),\lambda)=a\mu^{L_X}(x,\lambda_G)+b\mu^{L_Y}(y,\lambda_G)+b\mu^{L_Y}(y,\lambda_H).
		\label{eqn:hm-index}
	\end{equation}
	From the first inclusion of Proposition \ref{prop:pand-loci-containment}, and the assumption that $Q\subset \pi^{-1}_X(X^{S_G})$, it follows that for $\lambda_G\neq 1$ the sum of the first two terms on the right-hand side of \eqref{eqn:hm-index} is negative.
	If $\lambda_G=1$, the first two terms sum to 0.
	From the assumption that $(x,y)\in Q^{S_H}_{[a,b]}$, it follows that the last term is also negative.
	Therefore, $(x,y)\in Q^S_{[a,b]}$.
\ifdetailed
	We now include a second, essentially equivalent proof that $Q^{S_H}_{[k,1]}\subset Q^S_{[k,1]}$ following \cite[Prop.~8.2.1]{pandharipande1996}.
	The argument replaces the formalism in \eqref{eqn:hm-index} with local computations in coordinates.
	We only include a proof for the stable loci - the semi-stable case is identical.

	Fix the two linearizations
	\[\zeta^k:G\times H\rightarrow G\rightarrow SL(\Sym^k H^0(X,L_X)),\]
	\[\omega:G\times H\rightarrow SL(H^0(Y,L_Y)).\]

	Let $x\in Q^{S_H}_{[k,1]}$.

	We check the Numerical Criterion for 1-parameter subgroups.
	Let $\lambda:\C^\ast\rightarrow G\times H$ be given by
	\[\lambda_1:\C^\ast\rightarrow G,\]
	\[\lambda_2:\C^\ast\rightarrow H.\]
	Let $\{m_i\}$ be a diagonalizing basis for $\zeta^k\circ\lambda$ with weights $\{w(m_i)\}$.
	Let $\{n_j\}$ be a diagonalizing basis for the $\C^\ast\times\C^\ast$ representation $\omega\circ(\lambda_1\times\lambda_2)$.
	Let $w_1(n_j)$ and $w_2(n_j)$ denote the weights of the induced $\C^\ast$ representations $\omega\circ(\lambda_1\times 1)$ and $\omega\circ(1\times\lambda_2)$.

	The weights of the $\C^\ast$ representation $\omega\circ\lambda$ are given by
	\[w(n_j)=w_1(n_j)+w_2(n_j).\]

	Last, let $\{\overline{m}_i\}$ and $\{\overline{n}_j\}$ denote the elements of the diagonalizing bases which appear with nonzero coefficient at the image of $x$.

	We have three cases to consider:
	\begin{enumerate}
		\item $\lambda_1=1$:\\
			In this case, $w(m_i)=0$ for all $i$ and $w(n_i)=w_2(n_j)$ for all $j$.
			By assumption, $x$ is stable for the $H$-action, so there is a $\bar{n}_j$ with $w_2(\bar{n}_j)<0$.
			Then for any $m_i$,
			\[w(m_i\otimes \bar{n_j})=w_2(\bar{n_j})<0.\]
		\item $\lambda_2=1$:
			In this case, $w(n_j)=w_1(n_j)$ for all $j$.
			By assumption and the containment statement of \ref{prop:pand-loci-containment}, $x$ is stable with respect to the $G$ action.
			Thus, there exists a pair $\bar{m}_i$, $\bar{n}_j$ with 
			\[w(\bar{m}_i\otimes \bar{n}_j)<0.\]
		\item $\lambda_1\neq 1$, $\lambda_2\neq 1$:
			Again, there exists a $\bar{n}_j$ with $w_2(\bar{n}_j)<0$.
			By the containment statements of \ref{prop:pand-loci-containment}, we have
			\[\pi_X^{-1}\pi_X(x)\in Q^{S_G}_{[k,1]}.\]
			In particular, this implies that there is a $\bar{m_i}$ such that
			\[w(\bar{m_j})+w_1(\bar{n_j})<0.\]
			But then we have
			\[w(\bar{m_j}\otimes \bar{n_j})<0.\]
	\end{enumerate}
	We conclude that $Q^{S_H}_{[k,1]}\subset Q^{S}_{[k,1]}$.
\fi
\end{proof}

%
\section{Construction of $\overline{U}_{e,r,g}(\alpha)$}
\label{section:construction}
In this chapter, we construct the compactified universal moduli space as a GIT quotient.
First, in \S~\ref{section:moduli-problem} we state the moduli problem.
The GIT construction takes place in \S~\ref{section:main-construction}, and then we prove that the GIT quotient co-represents the moduli functor over the locus of stable curves in \S~\ref{section:corepresent}.
\subsection{The moduli problem}
\label{section:moduli-problem}
Here, we describe the moduli functor of sheaves we wish to study.
Throughout, we assume that $g\geq 2$.
The notion of $\alpha$-stability is developed in \cite{afsflip-loc} to describe the various stability conditions that arise in the Hassett--Keel program.
Defined for $2/3-\epsilon<\alpha\leq 1$, $\alpha$-stable curves are, in particular, reduced curves with at worst type $A_4$ singularities.
We refer the reader to \cite[Def.~2.5]{afsflip-loc} for the complete definition of $\alpha$-stability, but reproduce the main points here for convenience.
\begin{definition}
	\label{def:alpha-stability}
	A curve has a singularity of type $A_n$ at $p\in C$ if, \'etale locally near $p$ $C$ can be described as 
	\[y^2=x^{n+1}.\]
	For $\alpha\in(2/3-\epsilon,1]$, we say that a curve $C$ is \emph{$\alpha$-stable} if $\omega_C$ is ample and:
	\begin{itemize}
		\item[] For $\alpha\in(9/11,1)$: $C$ has only $A_1$-singularities.
		\item[] For $\alpha=9/11$: $C$ has only $A_1$,$A_2$-singularities.
		\item[] For $\alpha\in(7/10,9/11)$: $C$ has only $A_1$,$A_2$-singularities, and does not contain:
			\begin{itemize}
				\item $A_1$-attached elliptic tails.
			\end{itemize}
		\item[] For $\alpha=7/10$: $C$ has only $A_1$,$A_2$,$A_3$-singularities, and does not contain:
			\begin{itemize}
				\item $A_1$,$A_3$-attached elliptic tails.
			\end{itemize}
		\item[] For $\alpha\in(2/3,7/10)$: $C$ has only $A_1$,$A_2$,$A_3$-singularities, and does not contain:
			\begin{itemize}
				\item $A_1$,$A_3$-attached elliptic tails.
				\item $A_1/A_1$-attached elliptic tails.
			\end{itemize}
	\end{itemize}
\end{definition}
For $\alpha\in(7/10,9/11]$, the reader may also refer to Definition~\ref{def:pseudo-stable}.
\begin{definition} 
	\label{def:moduli-problem}
	Let $e$, $r$, and $g$ be integers such that $r\geq 1$ and $g\geq 2$.
	Let $\alpha\in (2/3,1]\cap\Q$.
	The functor $\overline{\mathcal{U}}_{e,r,g}(\alpha)$ associates to each scheme $S$ the following set of equivalence classes of data:
	\begin{itemize}
		\item A family of genus $g$ $\alpha$-stable curves $\mu:\mathcal{C}\rightarrow S$; i.e., a flat proper morphism such that every geometric fiber is an $\alpha$-stable curve of genus $g$.
		\item A coherent sheaf $\mathcal{F}$ on $\mathcal{C}$, flat over $S$, such that on geometric fibers $\mathcal{F}$ is slope-semistable, torsion-free of uniform rank $r$ and degree $e$.
	\end{itemize}
	Two pairs $(\mu:\mathcal{C}\rightarrow S,\mathcal{F})$ and $(\mu':\mathcal{C}'\rightarrow S,\mathcal{F}')$, are equivalent if there exists an $S$-isomorphism $\phi:\mathcal{C}\rightarrow\mathcal{C}'$ and a line bundle $L$ on $S$ such that $\mathcal{F}\cong\phi^\ast\mathcal{F}'\otimes\mu^\ast L$.
\end{definition}
\begin{rem}
	Recall that for $\alpha > 2/3-\epsilon$, the moduli space of $\alpha$-stable curves, $\bar{\mathscr{M}}_g(\alpha)$, admits a good moduli space, $\overline{M}_g(\alpha)$ (\cite{afsflip-existence}).
	For $\alpha > 7/10$, $\overline{M}_g(\alpha)$ is in fact a coarse moduli space.
	For $7/10<\alpha\leq 9/11$, $\overline{M}_g(\alpha)$ is isomorphic to $\overline{M}_g^{ps}$, Schubert's moduli space of pseudo-stable curves.
There is a divisorial contraction of coarse moduli spaces $\overline{M}_g\rightarrow\overline{M}_g^{ps}$ sending Deligne--Mumford stable curves with an elliptic tail to cuspidal curves (\cite{hassett-hyeon2009}).
\end{rem}
We will also require the so-called ``fiberwise'' moduli functor.
\begin{definition} 
	Let $(C,L)$ be any polarized curve.
	The functor $\mathcal{U}_{e,r}(C)$ associates to each scheme $S$ the set of equivalence classes (in the sense of Def.~\ref{def:moduli-problem}) of sheaves $\mathcal{F}$ on $S\times C$, flat over $S$, such that for each $s\in S$, $\mathcal{F}_s$ is slope semi-stable and torsion-free of uniform rank $r$ and degree $e$.
\end{definition}

\begin{rem}
	\label{rem:moduli-degree-isomorphism}
	There is an isomorphism of functors
	\begin{equation}
		\label{eqn:moduli-iso}
		\overline{\mathcal{U}}_{e,r,g}(\alpha)\rightarrow\overline{\mathcal{U}}_{e\pm(2g-2),r,g}(\alpha),
	\end{equation}
	\[(\mu:\mathcal{C}\rightarrow S,\mathcal{F})\mapsto (\mu:\mathcal{C}\rightarrow S,\mathcal{F}\otimes\omega_{\mathcal{C}/S}^{\otimes\pm 1}),\]
	and similarly we have
	\[
		\mathcal{U}_{e,r}(C)\xrightarrow{\sim}\mathcal{U}_{e\pm \deg L,r}(C).
		\]
	As a result, it suffices to study the moduli functors for large $e$.
\end{rem}

\subsection{GIT construction of the compactified universal moduli space}
\label{section:main-construction}
We now construct a GIT quotient which we will see in \S~\ref{section:corepresent} co-represents the restriction of the moduli functor $\overline{\mathcal{U}}_{e,r,g}(\alpha)$ to GIT-stable curves in $\overline{\mathcal{M}}_g(\alpha)$.
\begin{proof}[Proof of Theorem~\ref{thm:universal-moduli-space}]
We have broken the proof into several parts.
The construction and point classification are carried out in Proposition~\ref{prop:main-git-properties}.
The classification of the fibers over stable curves is carried out in Proposition~\ref{prop:fibers}.
The inclusions in the diagram are a consequence of the description of the fibers.
\end{proof}

\begin{prop}
	\label{prop:main-git-properties}
	Using the notation of Theorem~\ref{thm:universal-moduli-space}, there exists a projective variety $\overline{U}_{e,r,g}(\alpha)$ with a canonical projection $\pi:\overline{U}_{e,r,g}(\alpha)\rightarrow \overline{M}_g(\alpha)$.
	The points of $\overline{U}_{e,r,g}(\alpha)$ lying over $GIT$-stable curves correspond to aut-equivalence classes of slope semi-stable torsion-free sheaves of rank $r$ and degree $e$.
\end{prop}
\begin{proof}
We proceed with the proof in three parts: first we set up the GIT problem, then we proceed with the construction of the moduli space, and last we classify the orbit closures over GIT-stable curves.
For the sake of concision, the statements and proofs of various independent supporting arguments will be found after this proof.
\vskip .1in
\noindent\textbf{Part 1 - Setup}
From \cite{hassett2013log} and \cite{afsflip-existence}, for $\alpha > 2/3$ there is a scheme $H_g$, either a Hilbert scheme or Chow scheme depending on $\alpha$, equipped with the action of a reductive group $G$, along with a $G$-linearized line bundle $\O_{H_g}(1)$ such that
\[\overline{M}_g(\alpha)\cong H_g/\!\!/_{\O_{H_g}(1)}G.\]

Let $U_H\hookrightarrow H_g\times\P^N$ be the universal curve over $H_g$.
Let $\nu:U_H\rightarrow\P^N$ be the projection map.
Define $d=\deg \nu^\ast\O_{\P^N}(1)|_C$ for any curve $C\in H_g$.

We will construct our moduli space using a relative Quot scheme.
Specifically, let
\[Q\subset\mathfrak{Quot}_{\C^n\otimes\mathcal{O}_{U_H},U_H,H_g}^{\nu^{\ast}\mathcal{O}_{\P^N}(1),\Phi},\]
be the locus of quotients of uniform rank, where $\Phi$ is a Hilbert polynomial with respect to $\nu^{\ast}\mathcal{O}_{\P^N}(1)$ ensuring that all parametrized sheaves have rank $r$ and degree $e$.
A point $\xi\in Q$ corresponds to an equivalence class of the following data:
		  \begin{itemize}
			  \item a point of $H_g$ corresponding to an $\alpha$-stable curve $C$ embedded in projective space by $\nu^\ast\O_{\P^N}(1)|_C$;
			  \item a presentation of sheaves $\mathbb{C}^n\otimes \mathcal{O}_{C}\rightarrow E\rightarrow 0$ such that the Hilbert polynomial of $E$ with respect to $\nu^\ast\O_{\P^N}(1)|_C$ is $\Phi$ (i.e., $\deg E=e$ and $\rank E=r$) and $E$ is of uniform rank.
		  \end{itemize}
It is well-known that $Q$ is a union of connected components, but for lack of a reference for our specific case, we establish this in Lemma~\ref{lemma:uniform-rank-clopen} below.

The action of $G$ on $H_g$ lifts naturally to an action on $Q$.
Moreover, $Q$ is equipped with an action of $SL_n$ by changing coordinates in $\C^n\otimes \mathcal{O}_{U_H}$.
These two actions commute and therefore induce an action of $G\times SL_n$ on $Q$.
In order to apply the results from variation of GIT above (specifically, Prop.~\ref{prop:stable-loci-equality}), we express the problem in terms of a quotient of a product: there is a closed immersion respecting the group actions
\[Q\subset H_g\times\mathfrak{Quot}_{\C^n\otimes\mathcal{O}_{U_H},U_H,H_g}^{\nu^{\ast}\mathcal{O}_{\P^N}(1),\Phi}.\]
We now recall the very ample line bundle $\O_{\mathfrak{Quot}}(1)$ on the Quot scheme. 
Recall from the construction of the Quot scheme that tensoring a quotient by powers of an ample line bundle, e.g.\ the relative dualizing sheaf,
\[(\O_C^n\rightarrow E\rightarrow 0)\mapsto (\O_C^n\otimes\omega^{t}\rightarrow E\otimes\omega^{t}\rightarrow 0)\]
and applying global sections defines a rational map into a Grassmannian.
For $t\gg 0$, this becomes an embedding.
By applying Simpson's construction to $U_H\rightarrow H_g$, there is a number $t(d,g,r,e)$, depending only on $d$, $g$, $r$, and $e$ which will select our polarization on the Quot scheme.
Fix $b=t$ in Prop.~\ref{prop:stable-loci-equality} and let $a=k$ be the least integer such that the conclusion of the proposition holds.
Define $\O_{\mathfrak{Quot}}(t)$ to be the pullback of the very ample line bundle on the Grassmannian from the Pl\"ucker embedding.
Let
\[\mathscr{L}_{k,t}=\O_{H_g}(k)\boxtimes \O_{\mathfrak{Quot}}(t),\]
where $\boxtimes$ denotes the tensor product of the respective pullbacks.
The line bundle $\mathscr{L}_{k,t}$ admits a $G\times SL_n$-linearization.
\vskip .1in
\noindent\textbf{Part 2 - Construction of $\overline{U}_{e,r,g}(\alpha)$}
We now have everything required to define our GIT quotient:
\[\overline{U}_{e,r,g}(\alpha):=Q/\!\!/_{\mathscr{L}_{k,t}}(G\times SL_n).\]
The reader will recall that a description for large $e$ suffices because of the isomorphism between moduli functors for sheaves of different degrees described in~\eqref{eqn:moduli-iso}.

\vskip .1 in 


First, we observe that for $e\gg 0$, all slope semi-stable sheaves (with respect to the canonical polarization) appear in $Q$.
This is an essentially well-known boundedness statement, whose proof we omit (see for instance \cite[Cor.~1.7.7]{huybrechtslehn2010} and observe that the regularity of slope-semistable sheaves is uniformly bounded in families).

Let $Q_G$ denote the pre-image of $H_g^{S}$ under the projection morphism.
Because we selected $k/t$ so that the conclusion of the variation of GIT result from Proposition~\ref{prop:stable-loci-equality} holds, the $G\times SL_n$-stable (semi-stable) locus of $Q_G$ with respect to the linearization $\mathscr{L}_{k,t}$ is equal to the $SL_n$-stable (semi-stable) locus.
In other words, this implies that if $C$ is GIT-stable, then a pair $(C,F)$ is GIT (semi-)stable if and only if $F$ is GIT (semi-)stable as a point of the fiber of $Q_G$ over $[C]\in H_g$ with respect to the linearization induced by the restriction of $\mathscr{L}_{k,t}$.
Note that the fiber of $Q_G$ over $[C]\in H_g$ is naturally embedded in the Quot scheme $\mathfrak{Quot}^{\nu^\ast\O_{\P^N}(1)|_C,\Phi}_{\C^n\otimes\O_C,C}$.

If we can establish that GIT (semi-)stability with respect to the restriction of $\mathscr{L}_{k,t}$ is equivalent to slope (semi-)stability (with respect to the specified polarization) in each fiber simultaneously, then we will have demonstrated that $(C,F)$ is GIT (semi-)stable if and only if $F$ is slope (semi-)stable. 

We have seen however that this is true by applying Simpson's result to the universal family $U_H\rightarrow H_g$: slope stability and GIT stability agree asymptotically.

We have thus established that for $k\gg 0$ and a GIT-stable curve $C$, the GIT (semi-)stability of $(C,F)$ with respect to $G\times SL_n$ is equivalent to slope (semi-)stability of $F$. 

Now we construct the projection map $\pi$.
The morphism 
\[Q^{SS}\rightarrow H^{SS}_g\rightarrow \overline{M}_g(\alpha)\]
is equivariant with respect to the group action, and so by the universal property of the GIT quotient, induces a morphism
\[\overline{U}_{e,r,g}(\alpha)\xrightarrow{\pi} \overline{M}_g(\alpha),\]
sending a curve and a sheaf to the underlying curve.
\vskip .1 in 
\noindent\textbf{Part 3 - Orbit closures.}

First, observe that because $Q$ is a union of connected components (Lemma~\ref{lemma:uniform-rank-clopen}), the space $\overline{U}_{e,r,g}(\alpha)$ is the GIT quotient of a closed subset of a projective scheme, and is therefore projective.
By our construction above, Proposition~\ref{prop:stable-loci-equality} guarantees that the stable and semi-stable loci over GIT-stable curves are completely described by the fiberwise stable and semi-stable loci, described by Simpson.
The locus of slope semi-stable vector bundles on a smooth curve is open because it is the preimage under $\pi$ of an open subset of $\overline{M}_g(\alpha)$.

Next, we classify the orbit closures over the GIT-stable curves.
Let $Q^{SS}_G\subset Q^{SS}$ be the pre-image of $H_g^{S}$ under the projection morphism, let $\xi\in Q^{SS}_G$, and suppose that $\bar{\xi}\in Q^{SS}_G$ lies in the orbit closure of $\xi$.
It is immediate that $\pi(\bar{\xi})$ is in the orbit closure of $\pi(\xi)$.
Thus, if $\xi$ corresponds to $(C,F)$ and $\bar{\xi}$ corresponds to $(\overline{C},\overline{F})$, we see that $C$ and $\overline{C}$ are projectively equivalent.
The $G$-orbit closure of $\bar{\xi}$ consists of the images of $\overline{F}$ under projective automorphisms of $\overline{C}$.
On the other hand, the $SL_n$-orbit closure of $\xi$ is known (e.g., \cite[Thm. 1.21]{simpson1994moduli}) to consist of sheaves $E$ which are aut-equivalent to $F$.

We will demonstrate that these two orbit closures intersect, which will prove that $\xi$ and $\bar{\xi}$ are aut-equivalent.
Consider a path
\[\gamma=(\gamma_1,\gamma_2):\Delta\setminus\{p\}\rightarrow G\times SL_n,\]
such that
\[\lim_{z\rightarrow p}\gamma(z)\cdot\xi=\bar{\xi}.\]
Composing the path with the group action induces
\[\mu:\Delta\setminus\{p\}\rightarrow Q_G,\quad \mu(z):=\gamma_2(z)\cdot\xi.\]
As $Q$ is projective, $\mu$ extends to $\Delta$.
Notice that $\mu(p)$ is in the $SL_{n}$-orbit closure of $\xi$.
If we can demonstrate that $\mu(p)$ is also in the $G$-orbit closure of $\bar{\xi}$, then we are done.
We have
\[\lim_{z\rightarrow p}\gamma_1(z)\cdot\mu(p)=\lim_{z\rightarrow p}\gamma_1(z)\cdot\lim_{z\rightarrow p}(\gamma_2(z)\cdot \xi)=\lim_{z\rightarrow p}(\gamma_1(z)\cdot\gamma_2(z)\cdot\xi)=\bar{\xi}.\]
This completes the proof of the theorem.
\end{proof}
\begin{rem}
	The classification of the orbit closures fails for GIT strictly semi-stable curves because the argument relies on a description of the GIT semi-stable points in the fiber over the curve.
\end{rem}

For lack of a better reference, we include the following lemma to establish that the locus of sheaves of uniform rank in the Quot scheme above is a union of connected components.  It is similar to \cite[Lemma~8.1.1]{pandharipande1996}, with the difference that we work with arbitrary families of curves instead of Deligne--Mumford stable curves.
We note in passing that the result holds in greater generality.
In particular, with an eye towards future work, the result applies to other loci in the Hilbert scheme arising in the Hassett--Keel program (e.g., \cite{hassett2013log}).
\begin{lemma}
	\label{lemma:uniform-rank-clopen}
	Let $g\geq 2$ and $r$ be integers.
	Define $\Phi(t)= e+r(1-g) + drt$ and let $n=\Phi(0)$.
	Let $\kappa:\mathcal{C}\rightarrow\mathcal{B}$ be a projective, flat family of genus $g$ curves parametrized by an irreducible curve.
	Because $\kappa$ is a family of projective curves, $\mathcal{C}$ is equipped with a relatively ample line bundle $\mathcal{L}$.
	Assume the relative degree of $\mathcal{L}$ is at least $d$.
	Define 
	\[Q\subset\mathfrak{Quot}^{\mathcal{L},\Phi}_{\C^n\otimes\O_{\mathcal{C}},\mathcal{C},\mathcal{B}}\] 
	to be the subset corresponding to quotients
	\[\C^n\otimes\O_C\rightarrow E\rightarrow 0,\]
	where $E$ has uniform rank $r$ on $C$.
	Then the subscheme $Q$ is open and closed in $\mathfrak{Quot}^{\mathcal{L},\Phi}_{\C^n\otimes\O_{\mathcal{C}},\mathcal{C},\mathcal{B}}$.
\end{lemma}
\begin{proof}
	Let $\mathcal{E}$ be a $\kappa$-flat coherent sheaf.

	Suppose there exists a $b^\ast\in \mathcal{B}$ such that $\mathcal{E}_{b^\ast}$ has uniform rank $r$ on $\mathcal{C}_{b^\ast}=C$.
	Let $\{\mathcal{C}_i\}$ be the irreducible components of $\mathcal{C}$.
	The morphism $\kappa$ is flat and surjective of relative dimension 1, and so each $\mathcal{C}_i$ contains a component of $C$.
	By the semi-continuity of 
	\[r(z):=\dim_{k(z)}(\mathcal{E}\otimes k(z)),\]
	there is an open set $U_i\subset\mathcal{C}_i$ where $r(z)\leq r$.

	The set $U=\cap_i\kappa(U_i)\subset\mathcal{B}$ is open, and has the property that for every $b\in U$ the rank of $\mathcal{E}_b$ at the generic point of each irreducible component of $\mathcal{C}_b$ is at most $r$.
	We will show that $U=\mathcal{B}$ and conclude that $\mathcal{E}_b$ is of uniform rank for every $b\in U$.

	By way of contradiction, suppose that there exists a $b'\in \mathcal{B}$ such that $\mathcal{E}_{b'}$ is not of uniform rank $r$.
	Then again by semi-continuity, there is an $i$ so that $r(z)<r$ on an open $W\subset\mathcal{C}_i$:
	As $\mathcal{E}$ is flat over $\mathcal{B}$, the Hilbert polynomial of $\mathcal{E}_b$ is constant.
	In particular, the coefficient $rd$ of $t$ is constant.
	By Remark~\ref{remark:bound-on-rank}, $\sum_j r_jd_j=rd$, where $r_j$ is the generic rank on the $j$-th component of $\mathcal{C}_{b'}$.
	If $\mathcal{E}_{b'}$ is not of uniform rank, then some $r_j$ is greater than $r$ and some $r_i$ is less than $r$.
	Fixing the component $\mathcal{C}_i$ containing that component, we may appeal to upper semi-continuity and see that there is an open subset with rank bounded by $r_i<r$.

	But for any $b\in U\cap \kappa(W)$, the multiranks of $\mathcal{E}_b$ is at most $r$ on each component and strictly less than $r$ on at least one component.
	By Lemma~\ref{lemma:rank-definitions}, $\mathcal{E}_b$ cannot have Hilbert polynomial $\Phi(t)$, a contradiction.

	Thus, there was no such $b'$, and so for every $b\in\mathcal{B}$, $\mathcal{E}_b$ has uniform rank $r$, proving the lemma.
\end{proof}

\subsection{The quotient $\overline{U}_{e,r,g}(\alpha)$ co-represents $\overline{\mathcal {U}}_{e,r,g}(\alpha)$ over stable curves}
\label{section:corepresent}
We introduce a piece of notation for the following.
The functor $\overline{\mc{U}}_{S,e,r,g}(\alpha)$ is the restriction of $\overline{\mc{U}}_{e,r,g}(\alpha)$ to GIT-stable curves.
The same notation indicates the restriction of $\overline{U}_{e,r,g}(\alpha)$.
\begin{theorem}
	For any $e,r,g$ with $r\geq 1$ and $g\geq 2$, the scheme $\overline{U}_{S,e,r,g}(\alpha)$ co-represents the functor $\overline{\mathcal{U}}_{S,e,r,g}(\alpha)$.
	\label{thm:moduli}
\end{theorem}
Recall that to say $\overline{U}_{S,e,r,g}(\alpha)$ co-represents the functor $\overline{\mathcal{U}}_{S,e,r,g}(\alpha)$ is to say that $\overline{U}_{S,e,r,g}(\alpha)$ is initial with respect to morphisms from $\overline{\mathcal{U}}_{S,e,r,g}(\alpha)$ to schemes:
\[\xymatrix{
	\overline{\mathcal{U}}_{S,e,r,g}(\alpha)\ar[r]\ar[rd] & \overline{U}_{S,e,r,g}(\alpha)\ar@{-->}[d]\\
	& Z.  }\]
\begin{proof}
	First, note that by our definitions and the isomorphism 
	\[\overline{\mathcal{U}}_{S,e,r,g}(\alpha)\cong\overline{\mathcal{U}}_{S,e\pm(2g-2),r,g}(\alpha),\] 
	it suffices to prove the claim for $e>E(d,g,r,T)$, where $d$ is the degree of the polarization on the family of curves and $T$ is the class of singularities for $\alpha$-stable curves.
Now, we construct a natural transformation 
\[\phi:\overline{\mathcal{U}}_{e,r,g}(\alpha)\rightarrow\Hom(-,\overline{U}_{e,r,g}(\alpha)).\]
Let $e>E(d,g,r,T)$.
For a scheme $S$, let $(\mu,\mathcal{C},\mathcal{F})\in \overline{\mathcal{U}}_{e,r,g}(\alpha)(S)$.
The sheaf $\mu_\ast \nu^\ast\mathcal{O}_{\P^N}(1)$ is locally free of rank $N+1$.
Additionally, as we have taken $e$ sufficiently large, for all $s\in S$ 
$\mathcal{F}_s$ is generated by global sections.
This is a consequence of Corollary~\ref{cor:module-dimension-bound-reducible} and the slope semi-stability of $\mathcal{F}_s$.
In particular, $H^0(\mathcal{C}_s,\mathcal{F}_s)=\chi(\mathcal{F}_s)=:n$.
Thus $\mu_\ast\mathcal{F}$ is locally free of rank $n$.
Let $\{W_i\}$ be an open cover of $S$ trivializing both $\mu_\ast\nu^\ast\mathcal{O}_{\P^N}(1)$ and $\mu_\ast\mathcal{F}$:
\[\alpha_i:\C^{N+1}\otimes\mathcal{O}_{W_i}\xrightarrow{\cong} \mu_\ast\nu^\ast\mathcal{O}_{\P^N}(1)|_{W_i},\]
\[\beta_i:\C^{n}\otimes\mathcal{O}_{W_i}\xrightarrow{\cong} \mu_\ast\mathcal{F}|_{W_i}.\]
If $V_i=\mu^{-1}(W_i)$, then pulling back we obtain compositions
\[\C^{N+1}\otimes\mathcal{O}_{V_i}\xrightarrow{\cong}\mu^\ast(\mu_\ast\nu^\ast\mathcal{O}_{\P^N}(1)|_{V_i})\rightarrow\nu^\ast\mathcal{O}_{\P^N}(1)|_{V_i},\]
\[\C^n\otimes\mathcal{O}_{V_i}\xrightarrow{\cong}\mu^{\ast}(\mu_{\ast}\mathcal{F}|_{V_i})\rightarrow\mathcal{F}|_{V_i}.\]
The second morphisms, and hence the compositions, are surjective because both $\nu^\ast\O_{\P^N}(1)$ and $\mathcal{F}_s$ are globally generated; the former because it is very ample and the latter by our comment above.
Moreover, by construction, the induced maps on global sections are surjective as well.
A dimension count shows that they are isomorphisms.
By the universal property of $Q$, we obtain morphisms $\phi_i:W_i\rightarrow Q$.

We now pause to restate what we have established about the fiberwise behavior of $\mathcal{F}$:
\begin{itemize}
	\item $\mathcal{F}$ is fiberwise slope-semistable and torsion-free of uniform rank
	\item the fiberwise presentation of $\mathcal{F}$ induces an isomorphism on global sections.
\end{itemize}
From \cite{simpson1994moduli}, we know that we may \emph{uniformly} select a lower bound on $t$ depending only on $d,g,r,e$ such that for larger $t$, such families of sheaves are in the semi-stable locus of $Q$.
In other words, $\phi_i(W_i)\subset Q^{SS}$. 

As $\phi_i|_{W_i\cap W_j}$ differs from $\phi_j|_{W_i\cap W_j}$ precisely by the trivializations defined above, we obtain a well-defined morphism 
\[\phi:S\rightarrow \overline{U}_{S,e,r,g}(\alpha).\]
The naturality of the universal property of $Q$ implies that the defined $\phi$ is also natural.

The proof is complete pending the universality of $\phi$.
This is, however, a straightforward diagram chase and is left to the reader.
\end{proof}
Now we study the fibers of $\pi:\overline{U}_{S,e,r,g}(\alpha)\rightarrow\overline{M}_{S,g}(\alpha)$ over GIT-stable curves.
\begin{lemma}
	Fix an GIT-stable curve $C$.
	The \'etale sheafification of the fiber functor $\overline{\mathcal{U}}_{e,r,g}(\alpha)\times_{\overline{\mathcal{M}}_g(\alpha)}[C]$ is isomorphic to the stack quotient $\left[\mathcal{U}_{e,r}(C)/\aut(C)\right]$.
	\label{lem:fiber-1}
\end{lemma}
\begin{proof}
	The functor $\mathcal{U}_{e,r}(C)$ parametrizes families of sheaves on trivial families of curves isomorphic to $C$.	
	By definition, the stack quotient $\left[\mathcal{U}_{e,r}(C)/\aut(C)\right]$ parametrizes families of sheaves on isotrivial families of curves isomorphic to $C$.

	On the other hand, by definition, $\overline{\mathcal{U}}_{e,r,g}(\alpha)\times_{\overline{\mathcal{M}}_g(\alpha)}[C]$ parametrizes sheaves on isotrivial families of curves isomorphic to $C$.

	The two moduli functors therefore define isomorphic algebraic stacks.
\end{proof}
\begin{prop}
	Let $C$ be a GIT-stable curve.
	Then the fiber 
	\[\overline{U}_{e,r,g}(\alpha)\times_{\overline{M}_{g}(\alpha)}[C]\]
	is isomorphic to
	\[U_{e,r}(C)/\aut(C).\]
	\label{prop:fibers}
\end{prop}
\begin{proof}
	First, because $\overline{U}_{e,r,g}(\alpha)$ is a universal categorical quotient (see \cite{mumford1994geometric}), for a GIT-stable curve $C$ the fiber $\overline{U}_{e,r,g}(\alpha)\times_{\overline{M}_g(\alpha)}[C]$ co-represents the fiber $\overline{\mathcal{U}}_{e,r,g}(\alpha)\times_{\overline{\mathcal{M}}_g(\alpha)}[C]$.

	Hence, by Lemma~\ref{lem:fiber-1}, $\overline{U}_{e,r,g}(\alpha)\times_{\overline{M}_g(\alpha)}[C]$ co-represents  $\left[\mathcal{U}_{e,r}(C)/\aut(C)\right]$.
	If we can establish that $U_{e,r}(C)/\aut(C)$ also co-represents $\left[\mathcal{U}_{e,r}(C)/\aut(C)\right]$, then the claim is proved.

	But this immediate because $U_{e,r}(C)$ co-represents $\mathcal{U}_{e,r}(C)$.
\end{proof}

\section{Properties of $\overline{U}_{e,r,g}^{ps}$}
\subsection{The irreducibility of $\overline{U}_{e,r,g}^{ps}$}
Before defining the universal moduli functor, let us recall the definition of a pseudo-stable curve.
\begin{definition}\label{def:pseudo-stable}
A projective curve is pseudo-stable if
\begin{itemize}
	\item it is connected, reduced, and has only nodes and cusps as singularities;
	\item every subcurve of genus one meets the rest of the curve in at least two points;
	\item the canonical sheaf of the curve is ample.
\end{itemize}
Given a scheme $S$, a \emph{family of genus $g$ pseudo-stable curves parametrized by $S$} is a morphism $f:C\rightarrow S$, where $f$ is a flat and proper morphism such that every geometric fiber is a pseudo-stable curve of genus $g$.
Two families $f:C\rightarrow S$ and $g:D\rightarrow S$ are isomorphic if they are isomorphic over $S$.
Recall the moduli functor $\bar{\mathscr{M}}^{ps}_g$ which associates to a scheme $S$ the set of all families of genus $g$ pseudo-stable curves parametrized by $S$ modulo isomorphism.
\end{definition}

\label{section:irreducible}
Now, we establish the irreducibility of $\overline{U}^{ps}_{e,r,g}$.
\ifdetailed
The following lemmas extend Lemmas 9.1.1 and 9.2.3 of \cite{pandharipande1996} and lay the groundwork for a deformation-theoretic argument.
\else
The following lemma extends Lemma 9.2.3 of \cite{pandharipande1996} and lays the groundwork for a deformation-theoretic argument.
\fi
\ifdetailed
\begin{lemma}
	Let $\mu:\mathcal{C}\rightarrow S$ be a family of pseudo-stable, genus $g\geq 2$ curves.
	Let $\mathcal{E}$ be a $\mu$-flat coherent sheaf on $\mathcal{C}$.
	The condition that $\mathcal{E}_s$ is a slope-semistable torsion-free sheaf of uniform rank on $\mathcal{C}_s$ is open on $S$
	\label{lemma:irreducible-1}
\end{lemma}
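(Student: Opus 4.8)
The plan is to show that the three conditions defining the locus in question---torsion-freeness, uniform rank $r$, and slope-semistability---are each open, and then to intersect the resulting open sets. The overall strategy is to reduce to the case where the base $S$ is a smooth curve (by the usual argument that a constructible condition which is stable under generization is open, it suffices to check that the complement is closed under specialization, and this can be tested on discrete valuation rings, hence on one-dimensional traits), and then to exploit the fiberwise characterization already established. Throughout I would use that the Hilbert polynomial $\Phi$ of $\mathcal{E}_s$ is constant in $s$, since $\mathcal{E}$ is $\mu$-flat.

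First I would handle \emph{uniform rank}. This is exactly the content of Lemma \ref{lemma:uniform-rank-clopen}: after possibly restricting to the relative Quot scheme, the locus where the fibers have uniform rank $r$ is open (indeed clopen). The argument there reduces to semi-continuity of $z\mapsto \dim_{k(z)}(\mathcal{E}\otimes k(z))$ on the total space $\mathcal{C}$, combined with the constancy of the leading coefficient $rd$ of $\Phi$ forcing the multirank to be $(r,\dots,r)$; so I would simply cite that lemma. Next, for \emph{torsion-freeness}: on a reduced curve a pure sheaf of uniform rank is torsion-free, and purity is an open condition in flat families (the locus where the fiber acquires an embedded point or a torsion subsheaf supported in dimension zero is closed, as it is detected by the jumping of $h^0$ of the torsion subsheaf, or equivalently by depth). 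I would phrase this via the standard fact that the maximal torsion subsheaf $\tau_{\mathcal{E}_s}$ has $\chi(\tau_{\mathcal{E}_s})$ upper semi-continuous, so the locus $\{\chi(\tau_{\mathcal{E}_s})=0\}$ is open.

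The substantive step is \emph{slope-semistability}. Here I would use boundedness: for the fixed discrete invariants $(d,g,r,e)$ and fixed singularity type of pseudo-stable curves, the destabilizing subsheaves that can occur have bounded Hilbert polynomials, so they are parametrized by finitely many components of a relative Quot scheme over $\mathcal{C}/S$. Slope-semistability of $\mathcal{E}_s$ fails precisely when some such subsheaf $F\hookrightarrow \mathcal{E}_s$ violates the slope inequality; the existence of such an $F$ over the fiber is a closed condition on $S$, being the image (under a proper morphism of relative Quot schemes) of the closed locus where the numerical inequality is reversed. Taking the complement gives openness of the semistable locus.

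The main obstacle I anticipate is making the boundedness argument for semistability uniform over $S$ in a way that genuinely uses the pseudo-stable (rather than Deligne--Mumford stable) setting. The destabilizing-subsheaf Quot scheme must be bounded \emph{uniformly} across all fibers, and the relevant bound depends on the singularity type $T$ through the quantity $\delta$ appearing in Corollary \ref{cor:module-dimension-bound-reducible} and Definition \ref{def:degree-bound}. Since the family consists entirely of pseudo-stable curves, $T$ ranges over a fixed finite set (nodes and cusps), so $\delta$ is bounded and the boundedness of potential destabilizers is uniform; this is precisely the point where the uniformity of the earlier bounds pays off. I would therefore isolate this uniform boundedness as the crux of the argument and defer the purely formal openness manipulations (semi-continuity, properness of the Quot morphism, passing to the complement) to routine citations.
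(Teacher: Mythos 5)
Your proposal is correct in outline, but it takes a genuinely different route from the paper. The paper does not treat the three conditions separately and does not re-run a boundedness argument: given $s_0$ with $\mathcal{E}_{s_0}$ semistable, it twists $\mathcal{E}$ by $\omega^m_{\mathcal{C}/S}$ so that fiberwise $h^1$ vanishes, the fibers are globally generated, and the degree exceeds $E(g,r)$; then, exactly as in the proof of Theorem \ref{thm:moduli}, it constructs a classifying morphism $\phi:W\rightarrow Q$ on a neighborhood $W$ of $s_0$, and invokes Theorem \ref{thm:fiberwise-git} (with Proposition \ref{prop:stable-loci-equality}) to identify the conjunction ``slope-semistable, torsion-free, uniform rank'' for $\mathcal{E}_s\otimes\omega^m_{\mathcal{C}_s}$ with the open condition $\phi(s)\in Q^{SS}_{[k,1]}$; since twisting by a line bundle affects none of the three conditions, openness follows in one stroke. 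This is precisely the payoff of the uniform bounds the paper developed. Your route --- uniform rank via Lemma \ref{lemma:uniform-rank-clopen}, openness of purity, and openness of semistability via boundedness of destabilizing quotients plus properness of relative Quot schemes --- is the standard Grothendieck/Huybrechts--Lehn argument; what it buys is independence from the GIT machinery, and it would prove openness for families not covered by the fiberwise GIT theorem. Two caveats, neither fatal but both needing repair for a self-contained proof. First, the boundedness you need (torsion-free quotients of the $\mathcal{E}_s$ with slope bounded above form a bounded family, uniformly in $s$) is established nowhere in this paper, so you must import it, e.g.\ from \cite{simpson1994moduli}. Second, your attribution of that uniformity to $\delta$ and Corollary \ref{cor:module-dimension-bound-reducible} is off: that corollary bounds $\dim_k\mathcal{F}_y/\mathfrak{m}_y\mathcal{F}_y$ and feeds into global generation of semistabilizing subsheaves (Lemma \ref{lemma:semi-stable}); it does not bound Hilbert polynomials of destabilizing quotients, which is a Grothendieck-lemma statement of a different nature.
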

\begin{proof}
	Suppose $\mathcal{E}_{s_0}$ is a slope-semistable sheaf of uniform rank $n$ on $\mathcal{C}_{s_0}$ for some $s_0\in S$.
	There exists an integer $m$ such that
	\begin{enumerate}
		\item $h^1(\mathcal{E}_s\otimes \omega_{\mathcal{C}_s}^m,\mathcal{C}_s)=0$ for all $s\in S$.
		\item $\mathcal{E}_s\otimes \omega_{\mathcal{C}_s}^m$ is generated by global sections for all $s\in S$.
		\item $\deg(\mathcal{E}_{s_0}\otimes \omega^m_{\mathcal{C}_{s_0}})>E(g,r)$.
	\end{enumerate}
	It is enough to prove the lemma for $\mathcal{F}:=\mathcal{E}\otimes \omega^m_{\mathcal{C}/S}$.
	Let $f$ be the Hilbert polynomial of $\mathcal{F}$.

	We claim that there is an open $W\subset S$ containing $s_0$ and a morphism
	\[\phi:W\rightarrow Q\]
	such that $\mathcal{F}$ is isomorphic to the pullback of the universal quotient.
	The sheaf $\mu_\ast\omega_{\mathcal{C}/S}^4$ is locally free of rank $N+1:=4(2g-2)-g+1$.
	By the above, $\mu_\ast \mathcal{F}$ is locally free of rank $n$.
	Let $W$ be an open subset of $S$ containing $s_0$ such that both $\mu_{\ast}\omega_{\mathcal{C}/S}^4$ and $\mu_\ast \mathcal{F}$ are trivialized.
	On $V:=\mu^{-1}(W)$, we obtain
	\[\C^{N+1}\otimes\O_V\xrightarrow{\cong}\mu^{\ast}(\mu_{\ast}\omega^4_{\mathcal{C}/S}|_V)\rightarrow\omega^4_{\mathcal{C}/S}|_V,\]
	\[\C^n\otimes\O_V\xrightarrow{\cong}\mu^\ast(\mu_{\ast}\mathcal{F}|_V)\rightarrow\mathcal{F}|_V.\]
	The second morphisms, and hence the compositions, are surjective because both $\omega_{\mathcal{C}/S}^4$ and $\mathcal{F}_s$ are globally generated; the former because $g\geq 3$ and the latter by the above.
	Moreover, by construction, the induced maps on global sections are surjective as well.
	A dimension count shows that they are isomorphisms.
	Hence, $W$ has the desired property by the universal property of $Q$.

	Because $\phi(s_0)\in Q^{SS}_{[k,1]}$, which is open in $Q$, the lemma is proven.
\end{proof}
\begin{lemma}
	Let $S\subset \Spec(\C[x,y,t])$ be the subscheme defined by the ideal $(y^2-x^3-2t^6)$.
	Let $\mu:S\rightarrow\Spec(\C[t])$.
	Let $\zeta=(0,0,0)\in S$.
	There exists a $\mu$-flat sheaf $\mathcal{I}$ on $S$ such that $\mathcal{I}_{t\neq 0}$ is locally free and $\mathcal{I}_0\cong m_{\zeta}$, where $m_{\zeta}$ is the maximal ideal defining $\zeta$ on $S_0:=\mu^{-1}(0)$.
	\label{lemma:irreducible-2}
\end{lemma}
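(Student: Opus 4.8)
The plan is to realize $\mathcal{I}$ as the ideal sheaf of a suitable section of $\mu$ that passes through the cusp. First I would record the geometry of $S$. The Jacobian criterion (the gradient of $y^2-x^3-2t^6$ vanishes only at $\zeta$) shows that $S$ is smooth away from $\zeta$; the fibre $S_0=\{y^2=x^3\}$ is the standard cuspidal curve with its cusp at $\zeta$; and for $t\neq 0$ the fibre $S_t=\{y^2=x^3+2t^6\}$ is smooth. Since $y^2-x^3-2t^6$ is irreducible, $\O_S$ is a domain containing $\C[t]$, hence torsion-free and so flat over the PID $\C[t]$; thus $\mu$ is flat.

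The key observation is that the plane $\{x=0\}$ meets $S$ in the two sections $y=\pm\sqrt{2}\,t^3$, which collide exactly at $\zeta$ when $t=0$. I would therefore set $Z:=V(x,\,y-\sqrt{2}\,t^3)\subset S$ and take $\mathcal{I}:=\mathcal{I}_Z$, its ideal sheaf. Substituting $x=0$ and $y=\sqrt{2}\,t^3$ into the defining relation gives $2t^6-2t^6=0$ identically, so $\O_S/(x,\,y-\sqrt{2}\,t^3)\cong\C[t]$; that is, $Z$ is a section of $\mu$, in particular a smooth curve through $\zeta$, and $\O_Z$ is $\mu$-flat.

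Flatness of $\mathcal{I}$ then follows formally. From the short exact sequence
\[0\to\mathcal{I}_Z\to\O_S\to\O_Z\to 0,\]
with $\O_S$ and $\O_Z$ both $\mu$-flat, the long exact $\mathrm{Tor}$ sequence over $\C[t]$ forces $\mathcal{I}_Z$ to be $\mu$-flat. The same flatness lets me restrict the sequence to any fibre while preserving exactness, so $\mathcal{I}|_{S_t}=\mathcal{I}_{Z_t,S_t}$ with no spurious torsion. For $t\neq 0$, $Z_t$ is a single reduced point on the smooth curve $S_t$, hence a Cartier divisor, so $\mathcal{I}_t$ is a line bundle; indeed $\mathcal{I}_Z$ is already invertible on the smooth surface $S\setminus S_0$. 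For $t=0$ the section meets $S_0$ in the reduced point $\zeta$, and the restricted ideal is generated by the images of $x$ and $y-\sqrt{2}\,t^3$, namely $x$ and $y$, so $\mathcal{I}_0\cong m_\zeta$.

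The substantive point, and the only place the cuspidal geometry really enters, is the choice of $Z$: one must pass a smooth section of $\mu$ through the cusp so that its ideal restricts to the non-locally-free module $m_\zeta$ on $S_0$ while remaining a line bundle on the smooth nearby fibres. The factor $2t^6$ in the defining equation is exactly what makes $\{x=0\}$ cut out sections colliding at $\zeta$. Once $Z$ is chosen the rest is formal: flatness together with base change, and the fact that $\mathcal{I}_0=m_\zeta$ is automatically torsion-free, disposes of any worry about embedded points or torsion appearing in the special fibre.
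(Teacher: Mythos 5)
Your proof is correct and takes essentially the same approach as the paper: realize $\mathcal{I}$ as the ideal sheaf of a section of $\mu$ passing through the cusp, deduce $\mu$-flatness of the ideal from flatness of $\mathcal{O}_S$ and $\mathcal{O}_Z$ over the PID $\C[t]$, and restrict the short exact sequence to the special fiber to identify $\mathcal{I}_0$ with $m_\zeta$. The only difference is the choice of section — you use $V(x,\,y-\sqrt{2}\,t^3)$, while the paper uses $V(x-t^2,\,y-t^3)$, which as written does not actually lie on $S$ (it should be $V(x+t^2,\,y-t^3)$), so your version in fact corrects a sign slip in the paper's proof.
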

\begin{proof}
	There exists a section $L$ of $\mu$ defined by the ideal $(x-t^2,y-t^3)$.	
	Let $\mathcal{I}$ be the coherent sheaf corresponding to this ideal.
	We have the exact sequence
	\begin{equation}
		0\rightarrow\mathcal{E}\rightarrow\O_S\rightarrow\O_L\rightarrow 0.
		\label{eqn:irreducible-1-inlemma}
	\end{equation}
	Because $\O_S$ is torsion-free over $\C[t]$, so is $\mathcal{E}$.
	Thus, $\mathcal{E}$ is $\mu$-flat because $\C[t]$ is a Dedekind domain.
	Moreover, $\O_L$ is $\mu$-flat, and so~\eqref{eqn:irreducible-1} is exact after restriction to $\zeta$.
	Thus, $\mathcal{E}_0\cong m_{\zeta}$.
\end{proof}
\fi
\begin{lemma}
	Let $C$ be a pseudo-stable curve of genus $g$.
	Let $E$ be a slope-semistable torsion-free sheaf of uniform rank $r$ on $C$.
	Then there exists a family $\mu:\mathcal{C}\rightarrow\Delta_0$ and a $\mu$-flat coherent sheaf $\mathcal{E}$ on $\mathcal{C}$ such that:
	\begin{enumerate}
		\item $\Delta_0$ is a pointed curve.
		\item $\mathcal{C}_0\cong C$, and for every $t\neq 0$, $\mathcal{C}_t$ is a complete, nonsingular, irreducible genus $g$ curve.
		\item $\mathcal{E}_0\cong E$, and for every $t\neq 0$, $\mathcal{E}_t$ is a slope-semistable torsion-free sheaf of rank $r$.
	\end{enumerate}
	\label{lemma:irreducible-3}
\end{lemma}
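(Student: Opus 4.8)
The plan is to build the smoothing in two essentially independent stages---first deform $C$ to a smooth curve, then extend $E$ across this deformation---by gluing explicit local models at the singular points to a vector bundle on the smooth locus. This follows the strategy of \cite[Lemma~9.2.3]{pandharipande1996} for nodal curves, and the one genuinely new ingredient is the treatment of cusps, which is supplied by Lemma \ref{lemma:irreducible-2}. First I would analyze the local structure of $E$. Away from the finitely many nodes and cusps of $C$ the sheaf $E$ is locally free of rank $r$. At a singular point $x$ the singularity is either a node ($A_1$) or a cusp ($A_2$); both are plane-curve singularities of finite Cohen--Macaulay representation type, and because $E$ has \emph{uniform} rank $r$ the torsion-free $\O_{C,x}$-module $E_x$ is a direct sum of copies of $\O_{C,x}$ and of the maximal ideal $\mathfrak{m}_x$ (Seshadri; for the cusp one has moreover $\mathfrak{m}_x\cong\overline{\O}_{C,x}$). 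Thus there is an integer $0\leq a_x\leq r$ with
\[E_x\cong \O_{C,x}^{\,r-a_x}\oplus\mathfrak{m}_x^{\,a_x}.\]

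Next I would smooth each summand. Nodes and cusps are smoothable, and a connected reduced curve with only these singularities admits a one-parameter deformation $\mu:\mathcal{C}\rightarrow\Delta_0$ with $\mathcal{C}_0\cong C$ whose general fiber is a connected smooth---hence irreducible---curve of genus $g$; near each cusp I would arrange this family to be the model $y^2-x^3-2t^6=0$ of Lemma \ref{lemma:irreducible-2}, and near each node the corresponding nodal model of \cite[Lemma~9.2.2]{pandharipande1996}. Over the smooth locus of $\mathcal{C}$ the locally free sheaf $E$ extends to a vector bundle. At each singular point I would extend the $\O_{C,x}$-summands by the trivial bundle and each $\mathfrak{m}_x$-summand by the $\mu$-flat sheaf $\mathcal{I}$ of Lemma \ref{lemma:irreducible-2} (cusp) or its nodal analogue, each of which restricts to $\mathfrak{m}_x$ at $t=0$ and is locally free for $t\neq 0$. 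Gluing these local extensions to the bundle over the smooth locus produces a coherent sheaf $\mathcal{E}$ on $\mathcal{C}$ with $\mathcal{E}_0\cong E$.

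Finally I would verify the required properties. Flatness is local on $\mathcal{C}$, so $\mathcal{E}$ is $\mu$-flat; for $t\neq 0$ the fiber $\mathcal{E}_t$ is locally free of rank $r$ on the smooth curve $\mathcal{C}_t$, hence torsion-free, and its degree is pinned down by the constant Hilbert polynomial of the flat family. Since $\mathcal{E}_0=E$ is slope-semistable of uniform rank $r$, the openness statement of Lemma \ref{lemma:irreducible-1} provides an open neighborhood of $0\in\Delta_0$ over which every fiber is slope-semistable; shrinking $\Delta_0$ into this neighborhood yields conclusion (3), while (1) and (2) hold by construction.

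I expect the main obstacle to be the cusp: gluing the local cuspidal smoothing of Lemma \ref{lemma:irreducible-2} compatibly with the global deformation $\mathcal{C}\rightarrow\Delta_0$ so that $\mathcal{E}$ is simultaneously flat and generically locally free of the correct rank and degree. Matching the deformation parameter of the local model to the global family, together with the decomposition $E_x\cong\O_{C,x}^{\,r-a}\oplus\mathfrak{m}_x^{\,a}$ at a cusp, are the delicate points; the nodal part of the argument is identical to that of \cite{pandharipande1996}.
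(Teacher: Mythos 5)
Your proposal is correct in its essentials, but it globalizes the local analysis by a different mechanism than the paper. You agree with the paper on the first two steps: the decomposition $E_x\cong\O_{C,x}^{\,r-a_x}\oplus\mathfrak{m}_x^{\,a_x}$ at each node and cusp (Seshadri for nodes, the cited torsion-free classification for cusps) and the explicit flat cuspidal model of Lemma \ref{lemma:irreducible-2}. Where you diverge is the passage from local to global: you fix a global one-parameter smoothing of $C$ restricting to the prescribed local models, extend $E$ as a bundle over the smooth locus, and glue by hand, finishing with the openness statement of Lemma \ref{lemma:irreducible-1} for semistability of nearby fibers. The paper instead deforms the \emph{pair} $(C,E)$ functorially: the local models induce deformations over Artin rings for $Def^{loc}(C,E)$, the localization morphism $Def(C,E)\rightarrow Def^{loc}(C,E)$ is smooth by \cite[Section A.]{fantechi1997euler} (an $H^2$-vanishing statement on a curve), so the local deformations lift to a global formal deformation, which is then shown to be effective and algebraizable via Artin's theorem \cite[Thm.~2.5.13--2.5.14]{sernesi_defalgsch}. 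Your route is the one Pandharipande carried out explicitly for nodal curves, and the paper explicitly sanctions it as an alternative; its virtue is concreteness. What the paper's route buys is precisely the two points you flag as delicate: one never has to produce an algebraic curve smoothing matching prescribed local models, nor to check that sheaf extensions on overlaps can be made compatible --- the smoothness of the localization map performs that gluing automatically at the formal level, and algebraization converts the formal family into an actual family over a pointed curve. If you pursue your version, be aware that those two steps (in particular extending the vector bundle from $C^{sm}$ to an open subset of the two-dimensional total space compatibly with the local models) are where the genuine work lies, and making them rigorous essentially reproduces either Pandharipande's explicit cocycle manipulations or the deformation-theoretic argument. One point in your favor: you verify conclusion (3) explicitly via Lemma \ref{lemma:irreducible-1}, which the paper's proof leaves implicit.
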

\begin{proof}
	Let $z\in C$ be a singular point.
	Because $E$ is torsion-free of uniform rank $r$, we have
	\[E_z\cong \O_z^{\oplus a_z}\oplus \mathfrak{m}_z^{\oplus r-a_z},\]
	where $a_z$ is an integer determined by $E$ called the local semirank of $E$ (see \cite{martinsetal-torsionfree2012}).
	This follows when $C$ has a node at $z$ from Propositions (2) and (3) of chapter (8) of \cite{seshadri1982}.
	When $C$ has a cusp at $z$, the statement follows from the main theorem of \cite{martinsetal-torsionfree2012}.

	Because of its structure, a deformation of $E_z$ may be given by merely deforming $\mathfrak{m}_z$. 
	We will exploit this feature of $E_z$ to produce a local deformation $(\mathcal C_z,\mathcal E_z)$ of $(C_z,E_z)$ over the disc which smooths $C$ at $z$ and deforms $E_z$ into a vector bundle.
	If $z$ is a node of $C$, then \cite[Lemma~9.2.2]{pandharipande1996} gives an explicit deformation of $\mathfrak{m}_z$, which we have seen is adequate.
	If $z$ is a cusp, then let $S$ be a neighborhood of $z$ isomorphic to $\Spec(\C[x,y]/(y^2-x^3))$.
	Let $\mu:\Spec(\C[x,y,t]/(y^2-x^3-2t^6)\rightarrow\Spec(\C[t])$ be the projection map.
	We claim that the ideal $\mathcal{I}:=(x-t^2,y-t^3)$ is the desired deformation of $\mathfrak{m}_z$.
	To see this, observe that $\mathcal{I}$ defines a section of $\mu$, whose image we will call $L$, satisfying an exact sequence
	\begin{equation}
		0\rightarrow\mathcal{I}\rightarrow\O_S\rightarrow\O_L\rightarrow 0.
		\label{eqn:irreducible-1}
	\end{equation}
	Because $\O_S$ is torsion-free over $\C[t]$, so is $\mathcal{I}$.
	Thus, $\mathcal{I}$ is $\mu$-flat because $\C[t]$ is a Dedekind domain.
	Moreover the section of $\mu$ is an isomorphism, and so $\O_L$ is $\mu$-flat. 
	Hence~\eqref{eqn:irreducible-1} is exact after restriction to $z$.
	Thus, $\mathcal{I}_0\cong m_{z}$ and so $\mathcal{I}$ is the desired deformation.

	\vskip .1in
	At this point, we have produced a local deformation $(\mathcal{C}_z,\mathcal{E}_z)$ of the germ $(C_z,E_z)$ over the disc which smooths $C_z$ and deforms $E_z$ to a vector bundle.
	Let $(\hat{\mathcal{C}}_z,\hat{\mathcal{E}}_z)$ be the associated formal deformation.
	The collection of these formal local deformations at each singular point defines a formal deformation for the deformation functor $\Def^{loc}(C,E):=\prod_z \Def(C_z,E_z)$, where $\Def(C_z,E_z)$ is the local deformation functor for the pair $(C_z,E_z)$.
	As established in \cite[Section A.]{fantechi1997euler}, the morphism
	\[\Def(C,E)\xrightarrow{loc} \Def^{loc}(C,E)\]
	is smooth, where $\Def(C,E)$ is the deformation functor of the pair $(C,E)$ and $loc$ is the natural restriction map.
	Because $loc$ is smooth, we may lift the formal local deformation to a global formal deformation $(\hat{\mathcal{C}},\hat{\mathcal{E}})$ of $(C,E)$.
	This global formal deformation is effective; the proof is identical to the standard proof for deformations of schemes, e.g., \cite[Thm.~2.5.13]{sernesi_defalgsch}.
	This effective  deformation is then algebraizable by a special case of Artin's algebraization theorem (e.g., \cite[Thm.~2.5.14]{sernesi_defalgsch}).	Let $(\mathcal C,\mathcal E)$ be an algebrized deformation.  Restriction to a disc gives the theorem.  
	
	Alternatively, a direct gluing argument may be carried out to explicitly construct a global deformation from the local deformation, as in \cite[Lemma 9.2.3]{pandharipande1996}.
\ifdetailed
	We fix some notation.
	Let $B(d)\subset\C^2$ be the open ball of radius $d$ with respect to the usual norm.
	Let $B(d_1,d_2)$ be the open annulus.

	We may select disjoint open Euclidean neighborhoods $z\in U_z\subset C$ for every singular point $z$ of $C$ satisfying:
	\begin{enumerate}[(i)]
		\item $U_z$ is analytically isomorphic to $B(d_z)\cap (y^2-x^3=0)\subset \C^2$ or $B(d_z)\cap (xy=0)$ if $z$ is a cusp or a node, respectively.
		\item $E|_{U_z}\cong \O_z^{\oplus a_z}\oplus \mathfrak{m}_z^{\oplus r-a_z}$.
	\end{enumerate}
	Let $V_z\subset U_z$ be the closed neighborhood of radius $d_z/2$.
	Let $W=C-\cup V_z$.

	We now define a deformation of $C$ via an open cover:
	\[\{W\times \Delta_0\}\cup\{D_z:z\in\sing(C)\}.\]
	We define $D_z$ here for $z$ a cusp.
	When $z$ is a node, the definitions are similar and worked out explicitly in \cite[Lemma 9.2.3]{pandharipande1996}.

	Define
	\[K_z = B\left( \frac{d_z}{2},\frac{d_z}{2}+\epsilon_z \right)\times\Delta_0\cap (y^2-x^3=0)\subset\C^2\times\Delta_0.\]
	For small $\epsilon_z$ and $t$, there is an isomorphism which commutes with $\mu$:
	\[\gamma_z:K_z\rightarrow S_z\subset B(d_z)\times\Delta_0\cap (y^2-x^3-2t^6=0)\]
	such that
	\begin{equation}
		B\left( \frac{d_z}{3} \right)\times\Delta_0\cap S_z=\emptyset
		\label{eqn:irreducible-2}
	\end{equation}
	and $\gamma_z|_{t=0}$ is the identity map.

	The space
	\[\left(B(d_z)\times\Delta_0\cap (y^2-x^3-2t^6=0)\right)-S_z\]
	is disconnected.
	Let $D_z$ be the component which contains $(0,0,0)$.
	The isomorphism $\gamma_z$ determines a gluing of $W\times\Delta_0$ and $D_z$.
	The resulting family satisfies (1) and (2) above.

	Now, we construct $\mathcal{E}$.
	Extend $E_0$ trivially on $W\times \Delta_0$ to $\mathcal{E}|_W$.
	By (ii), $\mathcal{E}|_{K_z}$ is trivial.
	By Lemma~\ref{lemma:irreducible-2}, the ideal $\mathfrak{m}_z$ can be extended to a line bundle $L_z$ on $D_z$.
	By~\eqref{eqn:irreducible-2} and the proof of Lemma~\ref{lemma:irreducible-2}, we can assume that $L_z$ is trivial on $S_z$.
	Patching
	\[\bigoplus_1^{a_z}\O_{D_z}\oplus\bigoplus_{a_z+1}^1 L_z\]
	along $K_z\cong S_z$, we may define $\mathcal{E}$ such that $\mathcal{E}_0\cong E$ and $\mathcal{E}_{t\neq 0}$ is locally free.
	The patching must exist by (ii) above.
	Because
	\[K_z = K_{z,t=0}\times\Delta_0,\]
	the patching can be extended trivially along $K_z$.
	Finally, by Lemma~\ref{lemma:irreducible-1}, condition (3) holds.

	The case for $z$ a node is handled similarly in \cite[Lemma 9.2.3]{pandharipande1996}.
\fi
\end{proof}
\begin{prop}	\label{prop:irreducible}
	$\overline{U}^{ps}_{e,r,g}$ is an irreducible variety.	
\end{prop}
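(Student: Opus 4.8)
The plan is to realize $\overline{U}^{ps}_{e,r,g}$ as the closure of a single irreducible open subset, namely the locus $U^\circ$ of aut-equivalence classes of slope-semistable vector bundles on \emph{smooth} curves. By Theorem~\ref{thm:universal-moduli-space} this locus is open (it is the $\pi$-preimage of the open locus of smooth curves in $\overline{M}^{ps}_g$), so it suffices to prove two things: (i) $U^\circ$ is irreducible, and (ii) $U^\circ$ is dense, i.e. every point of $\overline{U}^{ps}_{e,r,g}$ lies in $\overline{U^\circ}$. Granting both, $\overline{U}^{ps}_{e,r,g}=\overline{U^\circ}$ is the closure of an irreducible set and hence irreducible.

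For (i) I would argue upstairs on the Quot scheme. Let $Q^\circ\subset Q_{e,r}^{SS}$ be the locus of quotients $\C^n\otimes\O_C\rightarrow E\rightarrow 0$ with $C$ smooth, $E$ locally free, and $\psi$ an isomorphism on global sections; its image in the quotient is exactly $U^\circ$. The projection $Q^\circ\rightarrow H^\circ_g$ to the smooth-curve locus of $H_g$ is surjective, and $H^\circ_g$ is irreducible because $M_g$ is. At each such point, writing $0\rightarrow K\rightarrow\O_C^n\rightarrow E\rightarrow 0$, the sequence $\mathrm{Ext}^1(\O_C^n,E)=H^1(C,E)^{\oplus n}=0\rightarrow\mathrm{Ext}^1(K,E)\rightarrow\mathrm{Ext}^2(E,E)=0$ forces $\mathrm{Ext}^1(K,E)=0$, so the relative Quot scheme is smooth along $Q^\circ$; thus $Q^\circ\rightarrow H^\circ_g$ is a smooth surjection onto an irreducible base. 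Its fiber over a fixed smooth $[C]$ is smooth and connected --- it is the good-quotient cover of the (classically) irreducible moduli space of slope-semistable rank $r$, degree $e$ bundles on $C$, and its fibers over that space are connected since $SL_n$ is connected. A smooth morphism with irreducible fibers over an irreducible base has irreducible total space, so $Q^\circ$, and hence $U^\circ$, is irreducible.

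For (ii), let $\xi\in\overline{U}^{ps}_{e,r,g}$ correspond to a pair $(C,E)$ with $C$ pseudo-stable and $E$ slope-semistable, torsion-free of uniform rank $r$. Lemma~\ref{lemma:irreducible-3} produces a pointed curve $\Delta_0$ and a flat family $(\mathcal{C},\mathcal{E})\rightarrow\Delta_0$ with $(\mathcal{C}_0,\mathcal{E}_0)\cong(C,E)$ and, for $t\neq 0$, $\mathcal{C}_t$ a smooth genus $g$ curve (hence stable, hence pseudo-stable) and $\mathcal{E}_t$ a slope-semistable vector bundle of rank $r$; the degree is $e$ on every fiber by flatness. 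After twisting by a suitable power of $\omega_{\mathcal{C}/\Delta_0}$ to land in the degree range of our construction, the coarse/functorial description of the moduli space (Theorem~\ref{thm:moduli}) yields a morphism $\Delta_0\rightarrow\overline{U}^{ps}_{e,r,g}$ carrying $0\mapsto\xi$ and every $t\neq 0$ into $U^\circ$. Therefore $\xi\in\overline{U^\circ}$, establishing density.

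The main obstacle is the deformation-theoretic input of Lemma~\ref{lemma:irreducible-3}: smoothing the cuspidal (and nodal) points of $C$ while simultaneously deforming a torsion-free sheaf with nonlocally-free stalks $\mathfrak{m}_z$ to a vector bundle on the smooth nearby fibers. The local model is exactly Lemma~\ref{lemma:irreducible-2}, and the delicate part is passing from the local to a global \emph{algebraic} deformation, via smoothness of $Def(C,E)\rightarrow Def^{loc}(C,E)$ together with algebraization; granting that lemma, the remaining work is the routine bookkeeping in (ii) --- choosing the twist and checking that the induced map to the coarse space is a genuine morphism landing in the semistable locus --- and the smoothness/connectedness verification in (i). Openness of the slope-semistable, torsion-free, uniform-rank condition (Lemma~\ref{lemma:irreducible-1}) underlies the fact that $U^\circ$ is open, closing the argument.
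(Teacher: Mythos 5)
Your proposal is correct and takes essentially the same route as the paper: the paper likewise gets irreducibility from the smooth-curve locus --- citing \cite[Prop.~24]{seshadri1982} for irreducibility of the fibers of $\pi_{SS}:Q^{SS}\rightarrow H_g$ over nonsingular curves, combined with irreducibility of $H^0_g$ --- and then invokes Lemma \ref{lemma:irreducible-3} to show $\pi^{-1}_{SS}(H^0_g)$ is dense in $Q^{SS}$, concluding via the surjection $Q^{SS}\rightarrow \overline{U}^{ps}_{e,r,g}$. The only deviations are that you prove fiber irreducibility yourself (Ext-vanishing smoothness plus connectedness of good-quotient fibers under the connected group $SL_n$) where the paper cites Seshadri, and you check density downstairs via the categorical moduli map of Theorem \ref{thm:moduli} rather than upstairs in the Quot scheme; both are sound, and your smoothness step has the side benefit of supplying the openness implicitly needed in the inference ``irreducible fibers over an irreducible base give an irreducible total space,'' which the paper leaves tacit.
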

\begin{proof}
	Consider $\pi_{SS}:Q^{SS}\rightarrow H_g$.
	By \cite[Prop. 24]{seshadri1982}, the scheme
	\[\pi^{-1}_{SS}([C])\]
	is irreducible for each nonsingular curve $C$, $[C]\in H_g$.
	Because the locus $H_g^0\subset H_g$ of nonsingular curves is irreducible, $\pi^{-1}_{SS}(H^0_g)$ is irreducible.
	By Lemma~\ref{lemma:irreducible-3}, $\pi^{-1}_{SS}(H^0_g)$ is dense in $Q^{SS}$.
	There is a surjection
	\[Q^{SS}\rightarrow \overline{U}^{ps}_{e,r,g},\]
	whence we conclude $\overline{U}^{ps}_{e,r,g}$ is irreducible.
\end{proof}

\bibliography{biblio}
\bibliographystyle{amsalpha}

\bigskip
\noindent
Department of Mathematics \\
Boston College\\
Chestnut Hill, MA 02467
USA

\bigskip
\noindent
{\tt matthew.grimes@bc.edu }
\end{document}